\newcommand{\be}{\begin{equation}}
\newcommand{\ee}{\end{equation}}
\newcommand{\beano}{\begin{eqn*}} 
	\newcommand{\eeano}{\end{eqnarray*}}
\newcommand{\ba}{\begin{array}}
	\newcommand{\ea}{\end{array}}
\declaretheoremstyle[headfont=\normalfont]{normalhead}
\newtheorem{theorem}{Theorem}[section]
\newtheorem{lemma}[theorem]{Lemma}
\newtheorem{definition}[theorem]{Definition}
\newtheorem{proposition}[theorem]{Proposition}
\newtheorem{remark}[theorem]{Remark}
\numberwithin{equation}{section}
\newtheorem{thm}{Theorem}[section]
\newtheorem{ex}[thm]{Example}
\begin{document}
\title{Decomposition of quandle rings of Takasaki quandles}
\author{Dilpreet Kaur}
\email{dilpreetkaur@iitj.ac.in}
\address{Indian Institute of Technology Jodhpur}

\author{Pushpendra Singh}
\email{singh.105@iitj.ac.in}
\address{Indian Institute of Technology Jodhpur}

\thanks{We are thankful to Mohamed Elhamdadi for his useful suggestions.The first named author would like to acknowledge the support of SERB through MATRICS project MTR/2022/000231.}
\subjclass[2010]{20C15, 57M27, 17D99, 20C05}
\keywords{Quandle rings, Takasaki quandles, Generalised dihedral groups, Representations and characters}
\maketitle

\begin{abstract}
Let $K =  \mathbb{R}$ or $\mathbb{C}$ and $T_{n}$ denote the Takasaki quandle of order $n$. In this article we provide decomposition of quandle ring $K[T_n]$ as direct sum of right simple ideals. This decomposition is equivalent to decomposition of regular representation \cite{EM18} of Takasaki quandles.
\end{abstract}

\section{Introduction}
A quandle is an algebraic system of a set together with a binary operation that satisfies three axioms derived from the Reidemeister moves on oriented link diagrams. The concept of quandles was introduced independently by Joyce \cite{DEJ} and Matveev \cite{SVM} for constructing invariants of knots and links. However, this concept can be traced back to the work of Mituhisa Takasaki, who named it Kei \cite{TM43} which is a particular type of quandle known as involutary quandle.

Quandles are studied extensively from the topological point of view with focus on their study as invariants in knot theory. However, due to their algebraic structure, recently, mathematicians are exploring quandles from algebraic point of view. In this regard, the concept of quandle rings is introduced in \cite{BPS19}, analogous to group rings. It has been noted that contrary to group rings, quandle rings are non-associative (except when the quandle is trivial). In \cite{BPS19} and \cite{BPS22}, authors discussed various properties of quandle rings, such as the relation between subquandles of quandles and ideals of quandle rings, power associativity of quandle rings, existence of zero divisors, computation of idempotent elements of integral quandle rings, commutator sub algebras etc.
 
  In the paper \cite{BPS19}, authors also posed various open problems, such as the isomorphism problem for quandle rings. In this regard, authors of \cite{EFT} gave an example of isomorphic quandle rings for non isomorphic quandles for rings of certain characteristics. They also gave results for the relation between quandles for their quandle rings to be isomorphic under certain conditions. Furthermore, the concept of derivation algebra of quandle algebra is studied in \cite{EMSZ22} where a complete description of derivation algebra for quandle algebra of dihedral quandles is provided. The notion of representations of quandles is introduced in \cite{EM18} analogous to the representations of groups.
  
   In this article, we study the decomposition of quandle rings. This concept is studied by authors of \cite{EFT} and \cite{KS22}, where they gave the decomposition of quandle rings of dihedral quandles. In this article we generalize this result to a broader class of quandles known as Takasaki quandles. The article is organized as follows, in section \ref{s2} we recall definitions and examples of quandles and quandle rings. We also provide a brief explanation of decomposition of quandle rings as direct sum of right simple ideals.  In section \ref{s3} we give detailed description of inner automorphism group and orbit structures of Takasaki quandles. In section \ref{Cl} and section \ref{reps} we discuss conjugacy classes and representation theory of the obtained inner automorphism groups.  In section \ref{s6} we provide a complete description of the decomposition of quandle rings of Takasaki quandles.

\section{Preliminaries for Quandles and  Quandle rings} \label{s2}

\begin{definition}
A set $X$ along with a binary operation $\triangleright$ is called quandle if it satisfies the following axioms:
\begin{itemize}
\item[•]
$x \triangleright x =x $ for all $x \in X$.
\item[•]
For any pair $x,y \in X$, there exist a unique $z \in X$ such that $x=z \triangleright y$.
\item[•]
$(x \triangleright y)\triangleright z = (x \triangleright z ) \triangleright (y \triangleright z) $ for all $x,y,z \in X$.
\end{itemize}
\end{definition}

If first axiom is omitted then structure $(X, \triangleright)$ is called \textit{rack}. We note that last two axioms can be combined and stated as the map $R_{x}:X \to X $ defined as $R_{x}(y)=y \triangleright x$ is an automorphism. These maps $R_{x}$ are called right multiplication maps or symmetries. Similarly left multiplication maps are defined as $L_{x}(y) = x\triangleright y$. Note that $L_{x}$ need not to be a bijective map. Now we give some examples of quandles:

\begin{itemize}
\item
Let $X = \{x_0,x_1,\hdots, x_{n-1}\}$ be a set, $X$ forms quandle with $x_{i} \triangleright x_{j} = x_{i}$ for all $x_i,x_j \in X$. It is known as trivial quandle of order $n$.
\item
Any group $G$ with the operation $g\triangleright h=hgh^{-1}$ becomes a quandle. This is called conjugation quandle and is denoted by $Conj(G)$.
\item
Group $G$ forms a quandle with respect to operation $g \triangleright h = hg^{-1}h$. In particular, if $G$ is abelian group then this quandle is called Takasaki quandle.
\item
The group $\mathbb{Z}_{n}=\{0,1,2,\hdots n-1\}$ together with operation $a \triangleright b= 2b-a~ (mod~n)$ is known as dihedral quandle. It is denoted by $\mathcal{R}_{n}$. 
\item
Let $G$ be additive abelian group, let $\phi \in Aut(G)$.
The group $G$ forms a quandle with the binary operation $a \triangleright b=\phi(a)+(id_{G}-\phi)b$. It is called Alexander quandle.

\end{itemize}

For every quandle $X$, the set of all symmetries generate a group under the composition operation. This group is called inner automorphism group of $X$ and it is denoted by $Inn(X)$. The group $Inn(X)$ acts on quandle $X$ naturally, thus dividing it into union of disjoint orbits.

A quandle $X$ is called connected if the action of $Inn(X)$ on $X$ is transitive. If all left multiplication maps $L_x$ defined as above are bijective then quandle is called latin quandle. We note that every latin quandle is a connected quandle. However there exist connected quandles which are not latin.

In \cite{BPS19}, authors introduced the concept of quandle rings analogous to group rings. Let $K$ be a ring and $(X,\triangleright)$ be a quandle. The quandle ring $K[X]$ is the set of finite linear combinations of quandle elements given as $\left\{ \sum_{i}\alpha_{i}x_{i}~|~\alpha_{i} \in K, x_{i} \in X  \right\}$. It forms an abelian group under point wise addition. The following multiplication equips it with the structure of a ring:
\begin{align*}
\left( \sum_{i}\alpha_{i}x_{i}  \right)\cdot \left( \sum_{j}\beta_{j}x_{j} \right)= \sum_{i,j}\alpha_{i}\beta_{j} \left( x_{i} \triangleright x_{j} \right)
\end{align*}

The quandle ring $K[X]$ is not associative ring in general as quandle operation is not associative except when $X$ is trivial quandle. Moreover, $K[X]$ does not have multiplicative identity.

In \cite{EFT}, authors discussed the decomposition of $K[X]$ into right simple ideals. We define $\rho : {\rm Inn}(X) \to {\rm GL}(K[X])$ by $\rho(R_x)(y) = y\triangleright x$ for all $R_x\in {\rm Inn}(X)$ and $y\in X.$ The map $\rho$ is a representation of ${\rm Inn}(X)$ and decomposing $K[X]$ into right simple ideals is equivalent to decomposing $\rho$ into irreducible representations of the group ${\rm Inn}(X).$

Let $X$ be a quandle of order $n$ and $\mathcal S_n$ be the symmetric group of degree $n$. Suppose $X$ is not a connected quandle and $X=X_1\cup X_2 \cup \dots X_k,$ where $X_i$ is an orbit for action of ${\rm Inn}(X)$ on $X$. We recall from \cite{EFT} that the map $\Phi : X\to \mathcal S_n$ defined by $\Phi(x)=R_x$ restricts to the maps $\Phi_i : X \to \mathcal S_{n_i},$ where $n_i=|X_i|$ for all $1\leq i\leq k$. We denote $Inn(X_i)$ as group generated by $\Phi_{i}(X).$ In this case, $K[X]=\oplus_{i=1}^{k} K[X_i]$ and decomposition of $K[X_i]$ into right simple ideals is equivalent to decomposing it as direct sum of irreducible representations of the group ${\rm Inn}(X_i).$ From now on by module we mean right module and fix ground field to $K = \mathbb{R}$ or $\mathbb{C}$.

\section{Inner Automorphism Groups and Orbits of Takasaki Quandles} \label{s3}

Let $H$ be a finite additive abelian group. Then $H$ together with binary operation $a \triangleright b = 2b-a$ forms a quandle known as Takasaki quandle. We denote it by $(T,\triangleright)$.
More specifically by fundamental theorem of finitely generated abelian groups, the group $H$ is isomorphic to direct product of cyclic groups say $ \mathbb{Z}_{n_{1}} \times \mathbb{Z}_{n_{2}} \times \hdots \times \mathbb{Z}_{n_{k}} $. From now onwards, in this section, $H$ denotes the group isomorphic to $\mathbb{Z}_{n_{1}} \times \mathbb{Z}_{n_{2}} \times \hdots \times \mathbb{Z}_{n_{k}}$. The quandle operation is defined as
$$(i_1,i_2,\hdots,i_k) \triangleright (j_1,j_2,\hdots, j_k) = (2j_1-i_1~(mod~n_1),2j_2-i_2~(mod~n_2),\hdots, 2j_k-i_k~(mod~n_k))$$ 
It forms quandle $(T,\triangleright)$ with above operation.
We set the following notation : $e_{0} = (0,0,\hdots,0)$ and $e_{i} = (0,0,\hdots,1,\hdots,0)$  where only $i'th$ co-ordinate is $1$ and all other co-ordinates are zero, as elements of group $H$.

\begin{lemma}\label{transpositions}
 Let $(T,\triangleright)$ be Takasaki quandle for abelian group $H$. Then the right multiplication map $R_{j}$ is product of transpositions for all $j \in T$.
\end{lemma}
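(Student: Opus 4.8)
The plan is to work directly with the permutation $R_j$ on the underlying set $H \cong \mathbb{Z}_{n_1}\times\cdots\times\mathbb{Z}_{n_k}$, where $R_j(i) = 2j - i$ (coordinatewise, modulo the respective $n_t$). First I would observe that $R_j$ is an involution: applying it twice gives $R_j(R_j(i)) = 2j - (2j - i) = i$, so $R_j^2 = \mathrm{id}$. A permutation that is its own inverse decomposes into a product of disjoint transpositions together with fixed points, so the only thing to check is that $R_j \neq \mathrm{id}$, i.e. that $R_j$ genuinely moves at least one element — equivalently that it is not the empty product. Actually the cleanest phrasing is: every involution in a symmetric group is a product of (disjoint) transpositions, and $R_j$ is an involution, hence done.

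**Identifying the cycle structure.** To make the statement precise and to prepare for the orbit computations later in the section, I would determine exactly which elements are fixed and which are swapped. The element $i$ is fixed by $R_j$ iff $2j - i = i$, i.e. $2(j - i) = 0$ in $H$, i.e. $j - i$ lies in the $2$-torsion subgroup $H[2] = \{h \in H : 2h = 0\}$. The remaining elements are partitioned into $2$-element orbits $\{i, 2j - i\}$. Thus $R_j$ is a product of exactly $\tfrac{1}{2}\big(|H| - |H[2]|\big)$ disjoint transpositions, and in particular it is a nontrivial product of transpositions whenever $|H| > |H[2]|$, i.e. whenever $H$ is not an elementary abelian $2$-group (when some $n_t$ is not a power of $2$, or more precisely when $H$ has an element of order $>2$). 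I would state the lemma's conclusion as covering the product-of-transpositions structure and note the size explicitly, since $|H[2]| = \prod_{t} \gcd(2, n_t)$.

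**Anticipated obstacles.** There is essentially no obstacle here: the argument is a one-line involution check plus bookkeeping of fixed points. The only subtlety worth flagging is the degenerate case where $H$ is elementary abelian of exponent $2$ (all $n_t = 2$ or $n_t = 1$), in which case $2j - i = -i = i$ for all $i$, so $R_j = \mathrm{id}$ is the empty product of transpositions; depending on how the authors wish to phrase "product of transpositions" this either needs the convention that the empty product counts, or a standing assumption excludes it. I would handle this by simply remarking that in that case every $R_j$ is the identity and the Takasaki quandle is trivial, so the interesting content is exactly the non-elementary-abelian case. The rest is immediate from the computation $R_j^2 = \mathrm{id}$.
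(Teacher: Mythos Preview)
Your proof is correct and follows the same approach as the paper: both arguments reduce to the observation that $R_j(R_j(i)) = 2j - (2j - i) = i$, so $R_j$ is an involution and hence a product of disjoint transpositions. The paper's proof is terser (two lines), while you add the explicit count of fixed points and the remark on the elementary abelian $2$-group degeneracy; these extras are accurate and useful context but not needed for the lemma as stated.
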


\begin{proof}
$R_{j}(i) = i \triangleright j = 2j-i~$ and $R_{j}(2j-i) = (2j-i) \triangleright j = i$ for all $i \in T$. Thus $R_{j}$ is product of transpositions.
\end{proof}
The following result regarding inner automorphism group for Takasaki quandle is also mentioned for Takasaki quandles of odd order in \cite{BDS17}. However we include this for sake of completeness. We begin with the following Lemma.

\begin{lemma}\label{Order}
 Let $(T,\triangleright)$ be Takasaki quandle for abelian group $H$. Then $R_{e_{i}}R_{e_{0}}$ has order $n_i$, if $n_i$ is odd and $n_i/2$, if $n_i$ is even.
\end{lemma}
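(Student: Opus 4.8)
The plan is to compute the permutation $R_{e_i}R_{e_0}$ explicitly on $T$ and read off its order. Fix the index $i$ and abbreviate $\sigma = R_{e_i}R_{e_0}$. For an arbitrary element $t=(i_1,\dots,i_k)\in T$ I would first apply $R_{e_0}$, which sends $t\mapsto 2e_0 - t = -t = (-i_1,\dots,-i_k)$, and then apply $R_{e_i}$, which sends any $s\mapsto 2e_i - s$. Composing, $\sigma(t) = 2e_i - (-t) = t + 2e_i$; that is, $\sigma$ is translation by $2e_i$ on the group $H$. (Here I use that $R_{e_i}$ and $R_{e_0}$ are the right multiplications in the Takasaki operation $a\triangleright b = 2b-a$, exactly as in Lemma~\ref{transpositions}.)

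Since $\sigma$ is translation by the element $2e_i\in H$, its order as a permutation of $T$ equals the order of $2e_i$ in the abelian group $H = \mathbb{Z}_{n_1}\times\cdots\times\mathbb{Z}_{n_k}$. But $2e_i$ has a nonzero entry only in the $i$-th coordinate, where it equals $2\pmod{n_i}$, so $\mathrm{ord}(2e_i) = \mathrm{ord}(2 \text{ in } \mathbb{Z}_{n_i}) = n_i/\gcd(2,n_i)$. This is $n_i$ when $n_i$ is odd and $n_i/2$ when $n_i$ is even, which is exactly the claim.

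The argument is essentially a direct computation, so there is no serious obstacle; the only thing to be careful about is the order of composition in $R_{e_i}R_{e_0}$ (whether it means "first $R_{e_0}$" or "first $R_{e_i}$") — but by the symmetry of translations this does not affect the conclusion, since $R_{e_0}R_{e_i}$ is translation by $-2e_i$, which has the same order. I would state the computation $\sigma(t) = t + 2e_i$ as the single key step and then invoke the elementary fact about the order of $2$ in a cyclic group to finish.
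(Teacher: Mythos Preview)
Your proof is correct and follows essentially the same approach as the paper: both compute $R_{e_i}R_{e_0}(t)=t+2e_i$ explicitly and then determine the order from the additive order of $2$ in $\mathbb{Z}_{n_i}$. The paper writes out the iterate $(R_{e_i}R_{e_0})^t$ coordinatewise whereas you phrase it as a translation and invoke $\mathrm{ord}(2e_i)=n_i/\gcd(2,n_i)$, but this is only a cosmetic difference.
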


\begin{proof}
Let $n_i$ be odd. We get that
$$R_{e_{i}}R_{e_{0}}(j_1,j_2,\hdots,j_k) = R_{e_{i}}(-j_1,-j_2,\hdots,-j_k)$$
$$=(j_1,j_2,\hdots,2+j_i,\hdots,j_k)$$
Now upon composing this $t$ times, we get 
$$(R_{e_{i}}R_{e_{0}})^{t}(j_1,j_2,\hdots,j_k) = (j_1,j_2,\hdots,2t+j_i,\hdots,j_k)$$
If $n_i$ is odd then $2t \equiv 0 ~(mod~n_i)$ only when $t=n_i$. Thus order of $R_{e_{i}}R_{e_{0}}$ is $n_i$. If $n_i$ is even, then order of $R_{e_{i}}R_{e_{0}}$ is equal to $n_i/2$.
\end{proof}

\begin{definition} Let $H$ be a finite abelian group. Then the group $H \rtimes \mathbb{Z}_{2}$ with following presentation
$$\{H,s~|~s^2,shs^{-1} = h^{-1},~ h \in H\}$$ is called generalised dihedral group. We denote this group with $Dih(H)$.
\end{definition}

\begin{lemma}\label{InnerAutomorphismGroup}
Let $(T,\triangleright)$ be Takasaki quandle for abelian group $G = \mathbb{Z}_{n_{1}} \times \mathbb{Z}_{n_{2}} \times \hdots \times \mathbb{Z}_{n_{k}} \times \mathbb{Z}_{n_{k+1}} \times \hdots \times \mathbb{Z}_{n_{k+d}}$ with $gcd(n_i,2) = 2, ~\forall~ i=1,2,\hdots k$ and $gcd(n_i,2) = 1, ~\forall~i= k+1,k+2,\hdots ,k+d$ Then $Inn(T) \cong 2G \rtimes \mathbb{Z}_{2}$, where $2G = \{g+g~|~g \in G\}$ is a subgroup of $G$.
\end{lemma}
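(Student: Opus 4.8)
The plan is to identify $\mathrm{Inn}(T)$ explicitly as a subgroup of the symmetric group on $T$ generated by the maps $R_x$, $x\in T$, and to exhibit an isomorphism onto $2G\rtimes\mathbb{Z}_2$. First I would reduce the generating set: since $R_x R_{e_0}$ acts by translation by $2x$ on each coordinate and $R_x = (R_x R_{e_0}) R_{e_0}$, the group $\mathrm{Inn}(T)$ is generated by $R_{e_0}$ together with the translations $\tau_{2x}:y\mapsto y+2x$ for $x\in T$. The set $\{\tau_{2x}: x\in G\}$ is precisely the translation group by the subgroup $2G=\{g+g: g\in G\}$; this is an abelian normal subgroup $N\cong 2G$ of $\mathrm{Inn}(T)$, using Lemma \ref{Order} (and its coordinatewise version) to see that the translation by $2e_i$ has the order $2G$ predicts in coordinate $i$ — namely $n_i/2$ when $n_i$ is even and $n_i$ when $n_i$ is odd. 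The element $R_{e_0}$ is the inversion $y\mapsto -y$, which has order $2$ and satisfies $R_{e_0}\tau_{2x}R_{e_0} = \tau_{-2x} = \tau_{2x}^{-1}$, giving exactly the defining relation of the generalised dihedral group $Dih(2G) = 2G\rtimes\mathbb{Z}_2$.

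Next I would verify that $R_{e_0}\notin N$, so that the extension $1\to N\to \mathrm{Inn}(T)\to \mathbb{Z}_2\to 1$ is non-split-free of collapse, i.e. $|\mathrm{Inn}(T)| = 2\,|2G|$. This is where one must be slightly careful: $R_{e_0}$ is the map $y\mapsto -y$, which coincides with a translation only if $-y = y + c$ for all $y$ and a fixed $c\in 2G$; taking $y=e_0$ forces $c=0$, hence $R_{e_0}=\mathrm{id}$, which is false (e.g. it moves $e_{k+1}$ since $n_{k+1}$ is odd and $>1$; if every $n_i=1$ the quandle is trivial and the statement is vacuous/degenerate, a case worth a remark). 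Therefore the subgroup generated by $N$ and $R_{e_0}$ has order exactly $2|2G|$, and the surjection $2G\rtimes\mathbb{Z}_2 \to \mathrm{Inn}(T)$ sending the canonical generators to $\tau_{2x}$ and $R_{e_0}$ is a bijection, hence an isomorphism.

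The main obstacle I anticipate is purely bookkeeping rather than conceptual: pinning down the order of $N\cong 2G$ as an abstract group and matching it with $\{g+g:g\in G\}$ requires the observation that $2G \cong \prod_i 2\mathbb{Z}_{n_i}$ and that $2\mathbb{Z}_{n_i}\cong \mathbb{Z}_{n_i/2}$ when $n_i$ is even but $2\mathbb{Z}_{n_i}\cong\mathbb{Z}_{n_i}$ when $n_i$ is odd — exactly the dichotomy already recorded in Lemma \ref{Order}. Once the map $\Phi: x\mapsto R_x$ is understood coordinatewise this is routine, so the only real content is checking normality of $N$ (immediate, as conjugation by any $R_x$ sends a translation to a translation) and the non-splitting/order count above. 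I would therefore structure the proof as: (1) $\mathrm{Inn}(T) = \langle R_{e_0}, N\rangle$ with $N$ the group of translations by $2G$; (2) $N\trianglelefteq \mathrm{Inn}(T)$ and $N\cong 2G$; (3) $R_{e_0}$ inverts $N$ and has order $2$ with $R_{e_0}\notin N$; (4) conclude $\mathrm{Inn}(T)\cong 2G\rtimes\mathbb{Z}_2$ by comparing presentations/orders.
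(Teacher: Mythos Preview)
Your approach is essentially the same as the paper's: both identify the abelian subgroup generated by the $R_{e_i}R_{e_0}$ (which you recognise more explicitly as translations $\tau_{2x}$), use Lemma~\ref{Order} to pin down its order as $|2G|$, and verify the dihedral conjugation relation $R_{e_0}\,\tau\,R_{e_0}=\tau^{-1}$. You are in fact more careful than the paper in one respect: you explicitly check that $R_{e_0}\notin N$ so that the extension does not collapse, a step the paper's proof omits.

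One small correction to your degenerate-case remark: the problematic case is not ``every $n_i=1$'' but rather $G$ an elementary abelian $2$-group (all $n_i\in\{1,2\}$), where $R_{e_0}=\mathrm{id}$ and indeed $\mathrm{Inn}(T)$ is trivial while $2G\rtimes\mathbb{Z}_2\cong\mathbb{Z}_2$; your witness $e_{k+1}$ assumes $d\geq 1$, so when $d=0$ you should instead use some $e_i$ with $n_i>2$.
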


\begin{proof}
From Lemma \ref{transpositions} and Lemma \ref{Order}, it follows  that $|R_{e_{0}}| = 2$ and $|R_{e_{i}}R_{e_0}| = {n_i}/2$ for all $i=1,2,\hdots,k$ and $|R_{e_{i}R_{e_{0}}}| = n_{i}$ for all $i= k+1,\hdots,k+d$. Let $\tilde{G}$ be the group generated by the set $\{R_{e_i}R_{e_{0}};~1\leq i \leq k+d\}$. Observe that $R_{e_{i}}R_{e_{0}}R_{e_{j}}R_{e_{0}} =R_{e_{j}}R_{e_{0}}R_{e_{i}}R_{e_{0}} $ for all $i,j=1,2,\hdots,k+d$. Thus $\tilde{G}$ is abelian group and $\tilde{G} \cong 2G \cong \mathbb{Z}_{n_{1}/2} \times \mathbb{Z}_{n_{2}/2} \times \hdots \times \mathbb{Z}_{n_{k}/2} \times \mathbb{Z}_{n_{k+1}} \times \hdots \times \mathbb{Z}_{n_{k+d}}$. Now we prove that $R_{e_{0}}hR_{e_{0}} = h^{-1}$ for all $h \in \tilde{G}$. It is enough to prove this relation for generators of $\tilde{G}$. Again by using Lemma \ref{transpositions}, we get $R_{e_{0}}(R_{e_{i}}R_{e_{0}}){R_{e_{0}}}^{-1} = (R_{e_{i}}R_{e_{0}})^{-1}$ for all $1\leq i \leq k+d$. This proves our result.
\end{proof}

The following Lemma gives a useful presentation of right multiplication map $R_j$ of Takasaki quandle $(T,\triangleright)$ for abelian group $H$.
\begin{lemma}\label{Remark1}
An arbitrary right multiplication map $R_{(j_1,j_2,\hdots,j_k)}$ can be written in terms of elements of $Inn(T)$ as $$R_{(j_1,j_2,\hdots,j_k)} = \left( \prod_{i=1}^{k}(R_{e_{i}}R_{e_{0}})^{j_{i}} \right )R_{e_{0}} $$
\end{lemma}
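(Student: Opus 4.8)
The plan is to prove Lemma~\ref{Remark1} by a direct computation, checking that the right-hand side, when applied to an arbitrary element $(\ell_1,\ell_2,\hdots,\ell_k) \in T$, produces the same result as $R_{(j_1,j_2,\hdots,j_k)}$. First I would record the action of $R_{(j_1,\hdots,j_k)}$ on a point: by definition of the Takasaki operation, $R_{(j_1,\hdots,j_k)}(\ell_1,\hdots,\ell_k) = (2j_1-\ell_1, 2j_2-\ell_2, \hdots, 2j_k-\ell_k)$, with each coordinate read modulo $n_i$. So the target map negates every coordinate and then adds $2j_i$ in slot $i$.

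Next I would analyze the right-hand side $\left(\prod_{i=1}^{k}(R_{e_i}R_{e_0})^{j_i}\right)R_{e_0}$. From the computation in Lemma~\ref{Order} (and its iteration), I have that $(R_{e_i}R_{e_0})^{t}$ fixes all coordinates except the $i$-th, to which it adds $2t$; equivalently $(R_{e_i}R_{e_0})^{j_i}(\ell_1,\hdots,\ell_k) = (\ell_1,\hdots,\ell_i + 2j_i,\hdots,\ell_k)$. Moreover, as noted in the proof of Lemma~\ref{InnerAutomorphismGroup}, the elements $R_{e_i}R_{e_0}$ for different $i$ commute, so the product $\prod_{i=1}^{k}(R_{e_i}R_{e_0})^{j_i}$ is well-defined independent of order and acts by adding $2j_i$ to the $i$-th coordinate for every $i$ simultaneously. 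Finally $R_{e_0}$ sends $(\ell_1,\hdots,\ell_k)$ to $(-\ell_1,\hdots,-\ell_k)$ since $R_{e_0}(\ell) = \ell \triangleright e_0 = 2\cdot 0 - \ell = -\ell$ coordinatewise.

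Composing (applying $R_{e_0}$ first, since in the convention $R_x(y) = y \triangleright x$ the rightmost map acts first on a point, but in any case I would be consistent with the composition order used elsewhere in the paper): the right-hand side sends $(\ell_1,\hdots,\ell_k) \mapsto (-\ell_1,\hdots,-\ell_k) \mapsto (2j_1-\ell_1,\hdots,2j_k-\ell_k)$, which is exactly $R_{(j_1,\hdots,j_k)}(\ell_1,\hdots,\ell_k)$. Since the two maps agree on every element of $T$, they are equal.

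The main thing to get right—more a bookkeeping subtlety than a genuine obstacle—is the composition convention: whether $R_{e_0}$ on the far right of the product acts first or last, and correspondingly whether one should negate-then-shift or shift-then-negate. Because $R_{e_0}$ is an involution and conjugating each $R_{e_i}R_{e_0}$ by $R_{e_0}$ inverts it (Lemma~\ref{InnerAutomorphismGroup}), the two readings differ only by replacing each $j_i$ by $-j_i$ inside the product while simultaneously changing the overall sign, and both collapse to the same permutation; so the identity holds regardless, but I would state the convention explicitly to keep the displayed computation clean. No deeper difficulty is expected: everything reduces to the coordinatewise formulas already established in Lemmas~\ref{transpositions}, \ref{Order}, and \ref{InnerAutomorphismGroup}.
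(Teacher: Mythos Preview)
Your proposal is correct and follows essentially the same approach as the paper: both apply the right-hand side to an arbitrary element, use that $R_{e_0}$ negates coordinatewise and that each $(R_{e_i}R_{e_0})^{j_i}$ adds $2j_i$ in the $i$-th slot (from Lemma~\ref{Order}), and compare with the defining formula for $R_{(j_1,\hdots,j_k)}$. Your extra paragraph on the composition convention is more explicit than the paper, but the underlying computation is identical.
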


\begin{proof}
Let $\left(a_1,a_2,\hdots,a_k \right) \in H$, by direct calculations, we get
$$\left( \prod_{i=1}^{k}(R_{e_{i}}R_{e_{0}})^{j_{i}} \right )R_{e_{0}}(a_1,a_2,\hdots,a_k)=\left( \prod_{i=1}^{k}(R_{e_{i}}R_{e_{0}})^{j_{i}} \right )(-a_1,-a_2,\hdots,-a_k)$$

Now lets look at $k'th $ coordinate, we get
\begin{align*}
(R_{e_{k}}R_{e_{0}})^{j_{k}}(-a_1,-a_2,\hdots,-a_k)&=(-a_1,-a_2,\hdots,2j_k-a_k)
\end{align*}

Similarly each $(R_{e_{i}}R_{e_{0}})^{j_{i}}$ only changes $i'th$ coordinate and so we get
\begin{align*}
\left( \prod_{i=1}^{k}(R_{e_{i}}R_{e_{0}})^{j_{i}} \right )R_{e_{0}}(a_1,a_2,\hdots,a_k)&=(2j_1-a_1,\hdots,2j_k-a_k)\\
&=R_{\left( j_1,j_2,\hdots,j_k \right)}\left( a_1,a_2,\hdots,a_k \right)
\end{align*}

This proves the result. 
\end{proof}
Now we provide definition of direct product of quandles:

\begin{definition}
Let $(X,\triangleright)$ and $(Y,\ast)$ be two quandles then the set $X \times Y$ together with operation $(x_1,y_1) \cdot (x_2,y_2) = (x_1 \triangleright x_2,y_1 \ast y_2) $ forms quandle. This is called direct product of two quandles.

\end{definition}

Let $\mathbb{Z}_{m}$ and $\mathbb{Z}_{n}$ be cyclic groups and let $\mathcal{R}_{m}$ and $\mathcal{R}_{n}$ denote the corresponding dihedral quandles. Then we observe that $\mathcal{R}_{m} \times \mathcal{R}_{n}$ is Takasaki quandle for abelian group $\mathbb{Z}_{m} \times \mathbb{Z}_{n}$. Thus we can say that Takasaki quandles are direct product of dihedral quandles. Recall that group $Inn(X)$ acts of quandle $(X,\triangleright)$ as $R_{j}(i) = i \triangleright j$. Now we discuss the orbits of quandle $(T, \triangleright)$ under the action of $Inn(T)$. We recall some results from \cite{SK} with slight change in notations.
\begin{lemma} \cite[Lemma 3.1]{SK} \label{Lemma 3.1SK}
Let $\{Q_i\}_{i=1}^{n}$ be a family of quandles and $Q = \prod_{i=1}^{n}Q_i$. Then $X \subset Q$ is an orbit of $Q$ if and only if $X = \prod_{i=1}^{n}X_i$, where $X_i$ is an orbit of $Q_i$ for all $1\leq i \leq n$.

\end{lemma}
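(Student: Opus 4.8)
The plan is to reduce both directions of the equivalence to a single statement: for any point $x = (x_1,\dots,x_n) \in Q$, the orbit $\mathcal{O}_Q(x)$ of $x$ under $\mathrm{Inn}(Q)$ coincides with $\prod_{i=1}^{n} \mathcal{O}_{Q_i}(x_i)$, the product of the orbits of its coordinates. Granting this, the forward direction follows by picking any $x \in X$, so that $X = \mathcal{O}_Q(x)$, and setting $X_i = \mathcal{O}_{Q_i}(x_i)$; the backward direction follows because if $X = \prod_{i=1}^{n} X_i$ with each $X_i$ the orbit of some $x_i \in Q_i$, then the point $x = (x_1,\dots,x_n)$ satisfies $\mathcal{O}_Q(x) = \prod_{i=1}^{n} \mathcal{O}_{Q_i}(x_i) = X$.

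For the inclusion $\mathcal{O}_Q(x) \subseteq \prod_{i=1}^{n} \mathcal{O}_{Q_i}(x_i)$, I would use that $\mathrm{Inn}(Q)$ is generated by the right multiplications $R_q$, $q \in Q$, together with the fact that in a direct product of quandles the map $R_{(q_1,\dots,q_n)}$ acts coordinatewise, namely $R_q(z_1,\dots,z_n) = (R_{q_1}(z_1),\dots,R_{q_n}(z_n))$, and likewise $R_q^{-1}$ acts as $(R_{q_1}^{-1},\dots,R_{q_n}^{-1})$. Hence every element of $\mathrm{Inn}(Q)$ preserves the decomposition into coordinates, so the $i$-th coordinate of any point in $\mathcal{O}_Q(x)$ lies in $\mathcal{O}_{Q_i}(x_i)$.

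The reverse inclusion is the substantive part. Given a point $(y_1,\dots,y_n)$ with $y_i \in \mathcal{O}_{Q_i}(x_i)$ for each $i$, I would move the coordinates to their targets one at a time. To change the $i$-th coordinate from $x_i$ to $y_i$ while leaving the already-adjusted coordinates untouched, write $y_i = R_{a_m}^{\varepsilon_m}\cdots R_{a_1}^{\varepsilon_1}(x_i)$ as a word in $\mathrm{Inn}(Q_i)$, and lift each $R_{a_t}$ to $R_{b_t} \in \mathrm{Inn}(Q)$ where $b_t$ has $i$-th coordinate $a_t$ and, in every coordinate $j \neq i$, the current value of the $j$-th coordinate of the point being acted on. By the idempotency axiom $c \triangleright c = c$, equivalently $R_c(c) = c$ and hence also $R_c^{-1}(c) = c$, applying $R_{b_t}^{\varepsilon_t}$ fixes every coordinate $j \neq i$ and sends the $i$-th coordinate $z_i$ to $R_{a_t}^{\varepsilon_t}(z_i)$. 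Composing these moves transforms the current point into the same point with $i$-th coordinate replaced by $y_i$; iterating over $i = 1,\dots,n$ carries $x$ to $(y_1,\dots,y_n)$, which is therefore in $\mathcal{O}_Q(x)$.

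The main obstacle — really the only point requiring care — is verifying that this "freeze the remaining coordinates" trick is legitimate: the lift $b_t$ must genuinely be an element of $Q = \prod_{i=1}^{n} Q_i$, which is fine since we may prescribe each of its coordinates independently, and the idempotency axiom must be invoked with exactly the right base point at each step, which is why the coordinates must be adjusted sequentially and the definition of $b_t$ must refer to the current point rather than a fixed one. Once this bookkeeping is in place, the argument is a routine induction on the number of coordinates.
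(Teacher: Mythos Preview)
The paper does not supply its own proof of this lemma; it is quoted verbatim from \cite[Lemma~3.1]{SK} and used as a black box. Your argument is correct and complete: the coordinatewise action of the generators $R_q$ gives the inclusion $\mathcal{O}_Q(x)\subseteq\prod_i\mathcal{O}_{Q_i}(x_i)$, and your ``freeze the other coordinates via idempotency'' trick---lifting each $R_{a_t}\in\mathrm{Inn}(Q_i)$ to $R_{b_t}\in\mathrm{Inn}(Q)$ with $b_t$ agreeing with the current point off the $i$-th slot---correctly establishes the reverse inclusion. The bookkeeping you flag (that $b_t$ must track the \emph{current} point, and that $R_c^{-1}(c)=c$ follows from $R_c(c)=c$ by bijectivity of $R_c$) is exactly what is needed, and nothing is missing.
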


\begin{lemma}\cite[Proposition 3.2]{SK} \label{Proposition 3.2SK}
The Takasaki quandle of $ G = \mathbb{Z}_{n_{1}} \times \mathbb{Z}_{n_{2}} \times \hdots \times \mathbb{Z}_{n_{k}} \times \mathbb{Z}_{n_{k+1}} \times \hdots \times \mathbb{Z}_{n_{k+d}}$ with $gcd(n_i,2) = 2, ~\forall~ i=1,2,\hdots k$ and $gcd(n_i,2) = 1, ~\forall~i= k+1,k+2,\hdots ,k+d$ has $2^k$ orbits.

\end{lemma}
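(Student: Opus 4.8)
The plan is to reduce the statement to the one-variable (dihedral-quandle) case and then combine via Lemma \ref{Lemma 3.1SK}. First I would invoke the observation made just before Lemma \ref{Lemma 3.1SK}: the Takasaki quandle of $G = \mathbb{Z}_{n_1}\times\cdots\times\mathbb{Z}_{n_{k+d}}$ is the direct product $\mathcal{R}_{n_1}\times\cdots\times\mathcal{R}_{n_{k+d}}$ of dihedral quandles. By Lemma \ref{Lemma 3.1SK}, every orbit of the product is a product $\prod_i X_i$ with $X_i$ an orbit of $\mathcal{R}_{n_i}$, and conversely; hence the number of orbits of the product is exactly $\prod_{i=1}^{k+d} (\text{number of orbits of }\mathcal{R}_{n_i})$.

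So the core step is: the dihedral quandle $\mathcal{R}_{n}$ has $2$ orbits when $n$ is even and $1$ orbit when $n$ is odd. For this I would argue directly from the action $R_j(i) = 2j - i \pmod n$. Starting from $i$, applying $R_j$ gives $2j-i$, so from any point one reaches $i + 2(j-i)$, i.e. one can reach any element congruent to $i$ modulo $\gcd(2,n)$. When $n$ is odd, $\gcd(2,n)=1$, so the action is transitive and there is one orbit; when $n$ is even, $2\mathbb{Z}_n$ has index $2$, giving the two cosets $\{$evens$\}$ and $\{$odds$\}$ as the two orbits, and one checks these are genuinely invariant since $2j-i$ has the same parity as $i$. (Alternatively this is immediate from Lemma \ref{Order} and Lemma \ref{Remark1}: the subgroup generated by $R_{e_i}R_{e_0}$ acting within each coordinate moves $i$ by even amounts, and $R_{e_0}$ sends $i\mapsto -i$.)

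Combining: for the indices $i = 1,\dots,k$ with $n_i$ even we get $2$ orbits each, and for $i = k+1,\dots,k+d$ with $n_i$ odd we get $1$ orbit each, so the total number of orbits is $2^k \cdot 1^d = 2^k$.

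I do not expect a serious obstacle here; the one point needing a little care is justifying that the two parity classes in the even case are actually single orbits rather than splitting further — this follows because within a fixed coordinate the generators $R_{e_i}R_{e_0}$ translate by $2$ and $(\mathbb{Z}_{n_i}, +2)$ acts transitively on each parity class. Since the paper cites Lemma \ref{Lemma 3.1SK} and Lemma \ref{Proposition 3.2SK} from \cite{SK}, the expected write-up is short: state the product decomposition into dihedral quandles, apply Lemma \ref{Lemma 3.1SK}, count orbits of each $\mathcal{R}_{n_i}$, and multiply.
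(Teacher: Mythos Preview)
Your proposal is correct and matches the paper's treatment exactly: the paper does not give a formal proof of this lemma (it is cited from \cite{SK}), but the paragraph immediately following it explains the count in precisely your way---invoke Lemma~\ref{Lemma 3.1SK} to reduce to the factors $\mathcal{R}_{n_i}$, note that each even-order factor has the two parity orbits and each odd-order factor is transitive, and multiply to get $2^k$. Your extra care about why the parity classes do not split further is a welcome addition but not something the paper spells out.
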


Let $(T,\triangleright)$ be Takasaki quandle as defined in Lemma \ref{Proposition 3.2SK}. From Lemma \ref{Lemma 3.1SK}, we get that the orbits of Takasaki quandle $(T,\triangleright)$ are exactly the product of orbits of individual components of $G$ as quandles.  Each even component $\mathbb{Z}_{n_{i}}$ for $1\leq i \leq k$ has two orbits, one consisting of even numbers and other consisting of odd numbers. We call them even orbit and odd orbit respectively. Each odd component $\mathbb{Z}_{n_{i}}$ for $ k+1 \leq i \leq k+d $ has only one orbit. We call them transitive orbits. Hence by Lemma \ref{Lemma 3.1SK}, we get $2^k$ orbits of action of $Inn(T)$ on $\left( T, \triangleright \right)$. We discuss one example for the sake of clarity of notations.

\begin{ex}
Let $(T,\triangleright)$ be Takasaki quandle for $G = \mathbb{Z}_{4} \times \mathbb{Z}_{6} \times \mathbb{Z}_{3}$. We know that each cyclic component of $G$ corresponds to a dihedral quandle. So $\mathcal{R}_{4}$ has two orbits $\{0,2\}$ and $\{1,3\}$. Similarly $\mathcal{R}_{6}$ has two orbits $\{0,2,4\}$ and $\{1,3,5\}$ and $\mathcal{R}_{3}$ has one orbit $\{0,1,2\}$. There are four ways to do the direct product thus $(T,\triangleright)$ has four orbits. We denote even orbits, odd orbits, transitive orbits with symbol $0,1,0$ respectively.
Thus four orbits are
\begin{align*}
X_{(0,0,0)} &=\{(0,0,0),(0,0,1),(0,0,2),(0,2,0),(0,2,1),(0,2,2),(0,4,0),(0,4,1),(0,4,2),(2,0,0),\\
&\quad\quad(2,0,1),(2,0,2),(2,2,0),(2,2,1),(2,2,2),(2,4,0),(2,4,1),(2,4,2)\} \\
X_{(0,1,0)}&=\{(0,1,0),(0,1,1),(0,1,2),(0,3,0),(0,3,1),(0,3,2),(0,5,0),(0,5,1),(0,5,2),(2,1,0),\\
&\quad\quad(2,1,1),(2,1,2),(2,3,0),(2,3,1),(2,3,2),(2,5,0),(2,5,1),(2,5,2)\} \\
X_{(1,0,0)}&=\{(1,0,0),(1,0,1),(1,0,2),(1,2,0),(1,2,1),(1,2,2),(1,4,0),(1,4,1),(1,4,2),(3,0,0),\\
&\quad\quad(3,0,1),(3,0,2),(3,2,0),(3,2,1),(3,2,2),(3,4,0),(3,4,1),(3,4,2)\} \\
X_{(1,1,0)}&=\{(1,1,0),(1,1,1),(1,1,2),(1,3,0),(1,3,1),(1,3,2),(1,5,0),(1,5,1),(1,5,2),(3,1,0),\\
&\quad\quad(3,1,1),(3,1,2),(3,3,0),(3,3,1),(3,3,2),(3,5,0),(3,5,1),(3,5,2)\} \\
\end{align*}
\end{ex}

With above notations, the orbit which is direct product of all even orbits and of transitive orbits is denoted by $X_{(0.\hdots,0)}$. We identify subscript as binary number and convert it into decimal number and from now on, we denote an arbitrary orbit with $X_i$. In particular we denote orbit $X_{(0.\hdots,0)}$ with symbol $X_0$. We denote the restrictions of maps $R_j$ to orbit $X_0$ with $T_j = R_j|_{X_0}$.

Now we discuss the structure of inner automorphism groups  generated by the set of restrictions of right multiplication maps to orbits.
\begin{lemma}\label{Special Case}
Let $(T,\triangleright)$ be Takasaki quandle for abelian group $G = \underbrace{\mathbb{Z}_{2} \times \hdots \mathbb{Z}_{2}}_\text{k~\text{ times }} \times \underbrace{\mathbb{Z}_{4} \times \hdots \times \mathbb{Z}_{4}}_\text{k'~times}$, then $Inn(X_0) \cong   \underbrace{\mathbb{Z}_{2} \times \hdots \mathbb{Z}_{2}}_\textbf{k'~times}$ .
\end{lemma}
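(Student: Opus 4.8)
The plan is to produce explicit generators of $Inn(X_0)$ by restricting to $X_0$ the generators of $Inn(T)$ coming from Lemma \ref{Remark1}, and then to recognise the group they generate as an elementary abelian $2$-group of rank $k'$.

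First I would pin down $X_0$. By the orbit description given just before the lemma, the even orbit of each $\mathbb{Z}_2$-factor is $\{0\}$ and the even orbit of each $\mathbb{Z}_4$-factor is $\{0,2\}$, so $X_0 = \{0\}^{k}\times\{0,2\}^{k'}\subseteq G$, a set of size $2^{k'}$. Applying Lemma \ref{Remark1} to the $k+k'$ cyclic factors of $G$, every right multiplication map factors as $R_j=\big(\prod_{i=1}^{k+k'}(R_{e_i}R_{e_0})^{j_i}\big)R_{e_0}$, so $Inn(T)$ is generated by $R_{e_0}$ together with the $R_{e_i}R_{e_0}$ for $1\le i\le k+k'$; restricting to the invariant subset $X_0$, the group $Inn(X_0)$ is generated by $R_{e_0}|_{X_0}$ and the $(R_{e_i}R_{e_0})|_{X_0}$.

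Next I would compute these restrictions. Since $R_{e_0}$ is the map $a\mapsto -a$ and $-0=0$, $-2=2$ in the relevant coordinates, $R_{e_0}|_{X_0}=\mathrm{id}$. By direct computation (as in the proof of Lemma \ref{Order}), $R_{e_i}R_{e_0}$ adds $2$ to the $i$-th coordinate and fixes the others. On $X_0$, a $\mathbb{Z}_2$-coordinate (for $1\le i\le k$) is $0$ and $0+2\equiv 0$, so $(R_{e_i}R_{e_0})|_{X_0}=\mathrm{id}$ in that range; for a $\mathbb{Z}_4$-coordinate (for $k+1\le i\le k+k'$) adding $2$ interchanges the values $0$ and $2$, so $(R_{e_i}R_{e_0})|_{X_0}=\tau_i$, the involution of $X_0$ that swaps the two values in the $i$-th slot and leaves the other slots fixed. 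Hence $Inn(X_0)=\langle\tau_{k+1},\dots,\tau_{k+k'}\rangle$.

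Finally I would identify this group. The $\tau_i$ are involutions acting on pairwise disjoint coordinates, hence they commute, so the assignment $(\epsilon_1,\dots,\epsilon_{k'})\mapsto\tau_{k+1}^{\epsilon_1}\cdots\tau_{k+k'}^{\epsilon_{k'}}$ is a surjective homomorphism $\mathbb{Z}_2^{k'}\to Inn(X_0)$. It is injective: a nontrivial product moves the element $(0,\dots,0)\in X_0$, since it changes exactly the $\mathbb{Z}_4$-slots indexed by the nonzero $\epsilon_i$ from $0$ to $2$, so it is not the identity. Therefore $Inn(X_0)\cong\mathbb{Z}_2^{k'}$. I do not expect a genuine obstacle here; the step needing the most attention is merely the arithmetic bookkeeping — that adding $2$ kills a $\mathbb{Z}_2$-coordinate and transposes $0\leftrightarrow 2$ on a $\mathbb{Z}_4$-coordinate — together with the disjoint-supports argument that makes the above homomorphism injective.
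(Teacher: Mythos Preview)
Your proof is correct and follows essentially the same approach as the paper. The paper's own proof is a two-line sketch: it notes that $R_{e_0}|_{X_0}$ is the identity and then defers everything else to the proof of Lemma~\ref{InnerAutomorphismGroup}; you have simply unpacked that deferral in full, computing each restricted generator explicitly and verifying the injectivity of the map from $\mathbb{Z}_2^{k'}$ that the paper leaves implicit.
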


\begin{proof}
We observe that the map $R_{e_{0}}|_{X_{0}}$ becomes identity map. Rest of the proof follows immediately from Lemma \ref{InnerAutomorphismGroup}.
\end{proof}

\begin{lemma}\label{Even Orbit}
Let $(T,\triangleright)$ be the Takasaki quandle for abelian group $ G = \mathbb{Z}_{n_{1}} \times \mathbb{Z}_{n_{2}} \times \hdots \times \mathbb{Z}_{n_{k}} \times \mathbb{Z}_{n_{k+1}} \times \hdots \times \mathbb{Z}_{n_{k+d}}$ with $gcd(n_i,2) = 2, ~\forall~ i=1,2,\hdots k$ and $gcd(n_i,2) = 1, ~\forall~i= k+1,k+2,\hdots ,k+d$ excluding the case of Lemma \ref{Special Case}. Then $Inn(X_0) \cong Inn(T)$.
\end{lemma}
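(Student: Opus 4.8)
The plan is to realize $Inn(X_0)$ as a quotient of $Inn(T)$ and then show the quotient map is injective. Since $X_0$ is one of the $Inn(T)$-orbits of $T$, it is invariant under every element of $Inn(T)$, so restriction to $X_0$ defines a group homomorphism $\pi$ from $Inn(T)$ to the symmetric group on $X_0$, given by $\pi(\phi)=\phi|_{X_0}$. By the very definition of $Inn(X_0)$ as the group generated by the $T_j=R_j|_{X_0}$, and since $Inn(T)=\langle R_j : j\in T\rangle$, the image of $\pi$ is exactly $Inn(X_0)$. Thus it is enough to prove $\ker\pi=\{1\}$, i.e.\ that no nontrivial element of $Inn(T)$ acts as the identity on $X_0$.

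I would carry this out using the explicit description of $Inn(T)$ from Lemma \ref{InnerAutomorphismGroup} and Lemma \ref{Remark1}. Writing $G$ additively, the abelian subgroup $\tilde G=\langle R_{e_i}R_{e_0}\rangle\cong 2G$ acts on $T$ by the translations $a\mapsto a+2g$ (for $g\in G$), and $R_{e_0}$ acts by $a\mapsto -a$; hence every element of $Inn(T)$ is either a translation $\tau_{2g}\colon a\mapsto a+2g$ or a ``reflection'' $\sigma_{2g}\colon a\mapsto 2g-a$. I would also identify the orbit $X_0$ concretely: it is the product of the even orbits of the even cyclic factors with the (transitive) orbits of the odd ones, so $X_0=\prod_{i=1}^{k}2\mathbb{Z}_{n_i}\times\prod_{i=k+1}^{k+d}\mathbb{Z}_{n_i}$; in particular $e_0=(0,\dots,0)\in X_0$. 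Now: if $\tau_{2g}\in\ker\pi$, evaluating at $e_0$ forces $2g=0$, so $\tau_{2g}$ is the identity of $Inn(T)$; and if $\sigma_{2g}\in\ker\pi$, then $2a=2g$ for all $a\in X_0$, i.e.\ the set $2X_0=\prod_{i=1}^{k}4\mathbb{Z}_{n_i}\times\prod_{i=k+1}^{k+d}2\mathbb{Z}_{n_i}$ must be a single point. Since $x\mapsto 2x$ is bijective on $\mathbb{Z}_{n_i}$ for odd $n_i$, this happens exactly when $4\mathbb{Z}_{n_i}=\{0\}$ for every even $n_i$ (that is, every even $n_i\in\{2,4\}$) and every odd factor is trivial --- precisely the case of Lemma \ref{Special Case}. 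Outside that case $2X_0$ has more than one element, so $\ker\pi$ contains no reflection, hence $\ker\pi=\{1\}$ and $\pi$ is an isomorphism $Inn(T)\cong Inn(X_0)$.

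The argument involves no deep input; the only thing that needs care is the bookkeeping at the two ``edges'' of the hypothesis. One must remember that the notion of even/odd orbit is meaningful only on the even factors (for odd $n_i$ the doubling map is an automorphism of $\mathbb{Z}_{n_i}$, so $2\mathbb{Z}_{n_i}=\mathbb{Z}_{n_i}$ and there is a single orbit), and one must pin down $X_0$ together with the induced action of $2G\rtimes\mathbb{Z}_2$ on it precisely enough to decide when a reflection $\sigma_{2g}$ can fix $X_0$ pointwise. Once $2X_0$ is computed, the dichotomy with Lemma \ref{Special Case} is immediate, so I anticipate this identification to be the only, and rather minor, obstacle.
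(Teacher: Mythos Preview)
Your proof is correct and cleaner than the paper's. The paper argues by recomputation: it says that the proof of Lemma~\ref{InnerAutomorphismGroup} goes through verbatim for the restricted maps $T_j=R_j|_{X_0}$ as soon as each $T_j$ still has order $2$ (rather than collapsing to the identity), and observes that this holds except in the situation of Lemma~\ref{Special Case}; the conclusion is then that $Inn(X_0)$ has the same presentation $2G\rtimes\mathbb{Z}_2$ as $Inn(T)$, giving an abstract isomorphism. Your route is different: you produce an explicit surjection $\pi\colon Inn(T)\to Inn(X_0)$ by restriction and compute its kernel directly from the translation/reflection description of $Inn(T)$, reducing the question to whether $2X_0$ is a singleton. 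Both arguments ultimately hinge on the same fact---that $R_{e_0}|_{X_0}$ is the identity precisely in the Special Case---but yours has the advantage of exhibiting the isomorphism as the restriction map itself (which is what is implicitly used later when one transports the representation $\rho_{X_0}$ back to $Inn(T)$), whereas the paper only establishes that the two groups are abstractly isomorphic.
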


\begin{proof}
We observe that proof of Lemma \ref{InnerAutomorphismGroup} holds as long as order of $R_{j}|_{X_{0}} = 2 $ for all $j \in T$ which holds true except the case discussed in Lemma \ref{Special Case}. Hence the result.
\end{proof}

\begin{lemma}\label{ShiftingThm}
Let $(T,\triangleright)$ be a Takasaki quandle and $X_{i}$ be an orbit of $T$. Then $Inn(X_{i}) \cong Inn(X_0)$.
\end{lemma}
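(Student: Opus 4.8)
The plan is to realize the arbitrary orbit $X_i$ as the image of $X_0$ under an automorphism of the whole quandle $(T,\triangleright)$ coming from a translation of the underlying group $G$, and then to transport $Inn(X_0)$ onto $Inn(X_i)$ by conjugating with this automorphism.

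The step that carries the weight is the observation that translations of $G$ are quandle automorphisms of $(T,\triangleright)$. For $v\in G$ define $\tau_v:T\to T$ by $\tau_v(x)=x+v$. Then
\[
\tau_v(x)\triangleright\tau_v(y)=2(y+v)-(x+v)=(2y-x)+v=\tau_v(x\triangleright y),
\]
and $\tau_{-v}$ is a two-sided inverse, so $\tau_v\in Aut(T)$. Now, given an orbit $X_i$ with binary label $(\epsilon_1,\dots,\epsilon_k)$ — so that in the even component $\mathbb{Z}_{n_j}$, $1\le j\le k$, the orbit $X_i$ meets the odd orbit precisely when $\epsilon_j=1$ — take $v\in G$ to have $j$-th coordinate $\epsilon_j$ for $1\le j\le k$ and $0$ in the odd components. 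Using Lemma~\ref{Lemma 3.1SK} together with the component-wise description of the orbits (adding $1$ in an even $\mathbb{Z}_{n_j}$ interchanges its even and odd orbits, while a transitive orbit is unchanged), one checks coordinate by coordinate that $\tau_v(X_0)=X_0+v=X_i$. Since each $Inn(T)$-orbit is a subquandle of $T$ (because $x\triangleright y=R_y(x)$ lies in the orbit of $x$), the restriction $\sigma:=\tau_v|_{X_0}:X_0\to X_i$ is an isomorphism of quandles.

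It remains to pass to inner automorphism groups. Conjugation by the bijection $\sigma$ is a group isomorphism from the symmetric group of $X_0$ onto the symmetric group of $X_i$. For $j\in T$ and $y\in X_i$ we have $\sigma^{-1}(y)=y-v\in X_0$, and a direct computation gives
\begin{align*}
\big(\sigma\circ(R_j|_{X_0})\circ\sigma^{-1}\big)(y)&=\sigma\big(R_j(y-v)\big)=\sigma\big(2j-(y-v)\big)\\
&=2(j+v)-y=R_{j+v}(y),
\end{align*}
so $\sigma\,(R_j|_{X_0})\,\sigma^{-1}=R_{j+v}|_{X_i}$ for every $j\in T$. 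Hence conjugation by $\sigma$ sends the generating set $\{R_j|_{X_0}:j\in T\}$ of $Inn(X_0)$ onto $\{R_{j+v}|_{X_i}:j\in T\}$; since $j\mapsto j+v$ is a bijection of $T$, this is exactly $\{R_m|_{X_i}:m\in T\}$, which generates $Inn(X_i)$ by definition. Therefore conjugation by $\sigma$ restricts to an isomorphism $Inn(X_0)\xrightarrow{\ \sim\ }Inn(X_i)$, as desired.

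I do not anticipate a genuine difficulty here: once $\tau_v$ is recognized as a quandle automorphism, everything else is formal. The only points needing care are bookkeeping ones — selecting the translation vector $v$ that carries the ``all-even'' orbit $X_0$ to the prescribed orbit $X_i$, and observing that the definition of $Inn(X_i)$ in terms of restrictions of \emph{all} right multiplications $R_m$, $m\in T$ (not only those with $m\in X_i$), is compatible with the conjugation argument because $m\mapsto m-v$ permutes $T$.
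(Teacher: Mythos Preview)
Your proof is correct and is essentially the same idea as the paper's: the paper observes that in each dihedral component the ``shift'' $i\mapsto i+1$ carries the transposition $(i,2j-i)$ of $R_j$ to the transposition $(i+1,2j-i+1)$ of $R_{j+1}$ (and fixed points likewise), and concludes that any orbit $X_i$ together with its restricted maps can be identified with $X_0$ ``with changes in notation.'' Your argument is a clean formalization of exactly this shift as conjugation by the quandle automorphism $\tau_v$, so the approaches coincide, with yours being the more explicit statement.
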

\begin{proof}
First, for dihedral quandle $(\mathcal{R}_{n}, \triangleright)$, we observe that if $(i,2j-i)$ is a transposition in the map $R_{j} \in Inn(\mathcal{R}_{n})$, then $(i+1,2j-i+1)$ is a transposition in the map $R_{j+1}$. Moreover if  $i$ is a fixed point of the map $R_{j}$, then $i+1$ is a fixed point of the map $R_{j+1}$. 
Now since Takasaki quandle is direct product of dihedral quandles, so this shifting happens in each co-ordinate of right multiplication maps of Takasaki quandles. So any arbitrary orbit $X_i$ and the restricted maps $R_{j}|_{X_i}$  with $R_{j} \in Inn(T)$ can be identified as orbit $X_0$ and maps $R_{j}|_{X_{0}}$ respectively with changes in notation. Thus $Inn(X_{i}) \cong Inn(X_0)$.
\end{proof}

Now for the decomposition of $K[T]$, we require the understanding of conjugacy classes and representation theory of generalised dihedral groups. We recall them in the coming sections.

\section{Conjugacy Classes of Generalised Dihedral Group} \label{Cl}
Let $G$ be a group and $x \in G$. Then the centralizer of $x$ is defined as 
$$C_{G}(x) :=\{g \in G~|~gxg^{-1}=x\}$$

We know that $C_{G}(x)$ is subgroup of $G$. The size of conjugacy class of $x$ is given as 
$$|Cl(x)| = \frac{|G|}{|C_{G}(x)|}$$

Recall that generalised dihedral group $H \rtimes \mathbb{Z}_{2}$ where $H$ is a finite abelian group, has following finite presentation
$$Dih(H) = \{H,s~|~s^2=1,shs^{-1} = h^{-1},~\forall ~ h \in H\}$$
We assume that $Dih(H)$ is non abelian group i.e. $H$ is not isomorphic to elementary abelian $2-$group. Now we compute the center of group $Dih(H)$.
\begin{proposition}\label{Center}
Let $Dih(H)$ be generalised dihedral group $H \rtimes \mathbb{Z}_{2}$ where  $ H =  \mathbb{Z}_{n_{1}} \times \mathbb{Z}_{n_{2}} \times \hdots \times \mathbb{Z}_{n_{k}} \times \mathbb{Z}_{m_{1}} \times \hdots \times \mathbb{Z}_{m_{d}}$ where $gcd(n_{i},2) = 2 $,  $ \forall~1\leq i \leq k$, $gcd(m_{i},2) = 1$, $ \forall~ 1\leq i \leq d$. Let $\mathbb{Z}_{n_{i}} =  < r_{i} > $, for $1 \leq i \leq k $ and $\mathbb{Z}_{m_{i}} = <{r'}_{i}> $, for $1 \leq i \leq d $. Then center $Z(Dih(H))$ is given by
$$Z(Dih(H)) = \{{r_{1}}^{i_{1}}{r_{2}}^{i_2}\hdots{r_{k}}^{i_{k}}~~|~~i_{j} = n_{j}/2~ \text{ or }~ i_{j} = 0,~ 1 \leq j \leq k\}$$
\end{proposition}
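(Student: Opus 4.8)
The plan is to compute the center directly from the presentation $Dih(H) = \{H, s \mid s^2 = 1, shs^{-1} = h^{-1}\}$, splitting the analysis according to whether a central element lies in the abelian subgroup $H$ or in the nontrivial coset $Hs$. First I would handle elements of the form $hs$ with $h \in H$: for such an element to be central it must in particular commute with every $h' \in H$, and a short computation gives $(h's)(hs) = h'h^{-1}$ while $(hs)(h's) = h h'^{-1}$, so centrality forces $h'h^{-1} = hh'^{-1}$, i.e. $h'^2 = h^2$ for all $h' \in H$. Since $H$ contains a factor $\mathbb{Z}_{m_1}$ of odd order $m_1 > 1$ (or, if there is no odd factor, some $\mathbb{Z}_{n_i}$ with $n_i \geq 4$ because $Dih(H)$ is assumed non-abelian), squaring is not constant on $H$, so no element of $Hs$ is central. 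Hence $Z(Dih(H)) \subseteq H$.

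Next I would determine which $h \in H$ are central. Such an $h$ automatically commutes with all of $H$, so the only remaining condition is that it commute with $s$: $shs^{-1} = h$, but $shs^{-1} = h^{-1}$ by the defining relation, so the condition is exactly $h^2 = 0$ (writing $H$ additively), i.e. $h$ is an involution or the identity. Writing $h = r_1^{i_1}\cdots r_k^{i_k}\, r_1'^{j_1}\cdots r_d'^{j_d}$ in the given cyclic generators, the condition $2h = 0$ becomes $2i_\ell \equiv 0 \pmod{n_\ell}$ for each $\ell$ and $2j_\ell \equiv 0 \pmod{m_\ell}$ for each $\ell$. Since $m_\ell$ is odd, $2j_\ell \equiv 0$ forces $j_\ell = 0$; since $n_\ell$ is even, $2i_\ell \equiv 0 \pmod{n_\ell}$ forces $i_\ell \in \{0, n_\ell/2\}$. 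This yields precisely the set displayed in the statement, and conversely every such element squares to the identity in the abelian group $H$ and therefore commutes with $s$, so it is central.

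I would then just assemble these two steps: the first shows $Z(Dih(H)) \subseteq H$, the second identifies $Z(Dih(H)) \cap H$ with the claimed set of elements $r_1^{i_1}\cdots r_k^{i_k}$ with $i_j \in \{0, n_j/2\}$, giving the equality. The only genuinely delicate point is the non-triviality input used to rule out central elements in $Hs$: one must invoke the standing hypothesis that $Dih(H)$ is non-abelian (equivalently, $H$ is not an elementary abelian $2$-group) to guarantee that $h \mapsto h^2$ is non-constant, since otherwise — when $H$ is an elementary abelian $2$-group — the whole group is abelian and the formula as stated would need reinterpreting. Everything else is a routine manipulation of the semidirect-product relations, so I do not expect any real obstacle beyond being careful with that hypothesis.
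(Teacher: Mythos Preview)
Your proposal is correct and follows essentially the same route as the paper: split $Dih(H)=H\cup Hs$, use the non-abelian hypothesis (existence of $h'$ with $h'^2\neq 1$) to rule out central elements in the coset $Hs$, and then observe that $h\in H$ is central iff $h=h^{-1}$, which in coordinates gives exactly $i_j\in\{0,n_j/2\}$ on the even factors and $j_\ell=0$ on the odd factors. The paper phrases the second step via $Cl(h)=\{h,h^{-1}\}$ rather than commutation with $s$, but this is the same computation.
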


\begin{proof}
We identify all the elements of $Dih(H)$ whose conjugacy class is singleton. From presentation of $Dih(H)$, we get that all elements of $Dih(H)$ are $\{h,sh~|~h \in H\}$. We have $sh \notin Z(Dih(H))$ as with $h'^{2} \neq 1$, we get $sh'shsh' = sh'^{2}h \neq sh$, implying $|Cl(sh)| >1$. We also have
$$|Cl(h)| = |Dih(H):C_{Dih(H)}(h)| \leq |Dih(H): H| \leq 2$$
Observe that  $shs^{-1} = h^{-1}$ and $shsh = 1$, so $g^{-1} \in Cl(g)$ for all $g \in Dih(H)$. This implies $Cl(h) = \{h,h^{-1}\}$ for all $h \in H$. 
Thus $|Cl(h)| = 1 $ if and only if $h = h^{-1}$. This proves our result.
\end{proof}

\begin{proposition}\label{Centralizer}
Let $Dih(H)$ be generalised dihedral group with above notations. Then for $h \in H$, we have 
$$C_{Dih(H)}(sh) =~ <sh,Z(Dih(H))>$$
\end{proposition}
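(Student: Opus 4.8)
The plan is to compute the centralizer $C_{Dih(H)}(sh)$ directly by intersecting it with the two cosets $H$ and $sH$ that partition $Dih(H)$. First I would observe that $Z(Dih(H)) \subseteq C_{Dih(H)}(sh)$ trivially, and that $sh \in C_{Dih(H)}(sh)$ since every element commutes with itself, so the inclusion $\langle sh, Z(Dih(H))\rangle \subseteq C_{Dih(H)}(sh)$ is immediate; the work is the reverse inclusion. For an arbitrary $g \in Dih(H)$ I would split into the two cases $g = h' \in H$ and $g = sh'$ with $h' \in H$, and in each case write out the condition $g(sh) = (sh)g$ using the defining relations $s^2 = 1$ and $shs^{-1} = h^{-1}$ (equivalently $sh's = h'^{-1}$ and $h'$ commutes with $h$ since $H$ is abelian).

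In the first case, $h'(sh) = (sh)h'$ becomes $h'sh = shh' = sh'h$ (using $H$ abelian), so after cancelling on the right we need $h's = sh'$, i.e. $sh's = h'$, i.e. $h'^{-1} = h'$, i.e. $h'^2 = 1$; by Proposition~\ref{Center} this says precisely $h' \in Z(Dih(H))$ (the center consists of exactly the elements $h' \in H$ with $h' = h'^{-1}$). So $C_{Dih(H)}(sh) \cap H = Z(Dih(H))$. In the second case, $g = sh'$, I would compute $(sh')(sh) = (sh)(sh')$; expanding, $sh'sh = s(h's)h = s(sh'^{-1})h = h'^{-1}h$ on the left side, while the right side is $shsh' = s(sh^{-1})h' = h^{-1}h'$, so the condition is $h'^{-1}h = h^{-1}h'$, i.e. $h'^{-2} = h^{-2}$... let me instead rearrange: $h'^{-1}h = h^{-1}h'$ gives $h h^{} = h' h'$ after multiplying, i.e. $h^2 = h'^2$, equivalently $(h^{-1}h')^2 = 1$, i.e. $h^{-1}h' \in Z(Dih(H))\cap H$. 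Writing $h' = h z$ with $z = h^{-1}h'$ central, we get $g = sh' = s h z = (sh)z$, so $g \in (sh)Z(Dih(H)) \subseteq \langle sh, Z(Dih(H))\rangle$. Hence $C_{Dih(H)}(sh) \cap sH \subseteq \langle sh, Z(Dih(H))\rangle$, and combining the two cases gives $C_{Dih(H)}(sh) \subseteq \langle sh, Z(Dih(H))\rangle$, completing the proof.

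The only mildly delicate point — and the step I would flag as the main obstacle — is the bookkeeping with the relation $h's = s h'^{-1}$ when moving $s$'s past elements of $H$ in the case $g = sh'$; it is easy to drop an inverse. I would therefore state once, at the start of the proof, the normal-form identity $s h' = h'^{-1} s$ for all $h' \in H$, and then reduce every product to the form $h'' s^{\epsilon}$ before comparing. Everything else is forced by Proposition~\ref{Center}, which pins down $Z(Dih(H))$ as exactly the $2$-torsion coming from the even cyclic factors, so no separate argument about which elements are central is needed here.
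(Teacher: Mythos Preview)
Your proposal is correct and follows essentially the same approach as the paper: both split an arbitrary centralizing element into the cases $g=h'\in H$ and $g=sh'$, reduce the commutation condition to $h'^{-1}=h'$ (resp.\ $h'^{-1}h=h^{-1}h'$), and invoke Proposition~\ref{Center} to conclude $h'\in Z(Dih(H))$ (resp.\ $h^{-1}h'\in Z(Dih(H))$, so $g=(sh)z$). If anything, your write-up is slightly cleaner in that you explicitly record the trivial inclusion $\langle sh, Z(Dih(H))\rangle \subseteq C_{Dih(H)}(sh)$ and state the normal-form identity $sh'=h'^{-1}s$ up front.
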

\begin{proof}
Let $h' \in H$ and $h' \in C_{Dih(H)}(sh)$. Then $h'sh = shh'$, which gives us ${h'}^{-1} = h'$. Now Proposition \ref{Center} implies $h' \in Z(Dih(H)$. This proves $Z(Dih(H) \subset C_{Dih(H)}(sh)$.

Let $g = sh' \in C_{Dih(H)}(sh)$ with $h' \in H$. Then we get ${h'}^{-1}h = h^{-1}h' $. From Proposition \ref{Center}, this implies $h^{-1}h' \in Z(Dih(H))$. Thus we get $g = shk$ for some $k \in Z(Dih(H))$. This completes the proof.
\end{proof}

As a consequence of Proposition \ref{Centralizer}, we get
$$|C_{Dih(H)}(sh)|=2^{k+1}$$
and so 
$$|Cl(sh)| = \frac{|Dih(H)|}{2^{k+1}}$$

Thus $Dih(H)$ has $2^k$ singleton conjugacy classes, $|(|H|-2^k)/2|$ conjugacy classes with two elements and $2^{k}$ conjugacy classes with $|Dih(H)|/2^{k+1}$ elements.

\section{Irreducible representations of Generalised Dihedral Group} \label{reps}

In this section we discuss irreducible representations of generalised dihedral group $Dih(H)$. Since $Dih(H) = H \rtimes \mathbb{Z}_{2}$, so we induce irreducible representations of $H$ to $Dih(H)$. We use Clifford theory to check the irreducibility of induced representations. For more details we refer to \cite{SST}.

\begin{definition}{Induced representation}: Let $K$ be subgroup of $G$ and $(\phi,V)$ be a representation of $K$. The representation $\left( \rho,W \right)$ of $G$ induced from $\left( \phi , V \right)$ is given by :
$$ W = \bigoplus_{i=1}^{n}g_{i}V $$
where the set $\{g_1,g_2,\hdots g_n\}$ is the complete set of coset representatives of $K$ in $G$. We write $\rho = Ind_{K}^{G}\phi$ and
$$Dim~Ind_{K}^{G}{\phi} = (Dim~V) \cdot |G/K|$$ 
\end{definition}

The action of $G$ on $W$ is defined as below, it is enough to define this action on each direct summand in $W$. Let $g \in G$, $h \in K$ and $v \in V$ then $g.g_i = g_{\sigma(i)} h$ for some permutation $\sigma \in S_{n}$ and the action is
$$g\cdot(g_{i}v) = g_{\sigma(i)}hv$$

Now we discuss the irreducibility of induced representations. Let $\hat{K}$ denote the set of irreducible representations of $K$. Let $\sigma \in \hat{K}$ and $g \in G$. Then $g$-conjugate representation of $\sigma$ is the representation $\prescript{g}{}{\sigma} \in \hat{K}$ defined by
$$\prescript{g}{}{\sigma}(h) = \sigma(g^{-1}hg)$$
for all $h \in K$.
The subgroup
$$I_{G}(\sigma) = \{g \in G : \prescript{g}{}{\sigma} \sim \sigma\} $$
of $G$ is called the inertia group of $\sigma \in \hat{K}$. It is easy to observe that $I_{G}(\sigma)$ contains $K$.

If $K$ is a subgroup of $G$ of index two then we can define an  alternating representation of $G$ with respect to $K$ as one dimensional representation $\varepsilon : G \to \mathbb{C}^{*}$ defined as 
$$\varepsilon(g)= \begin{cases}
                  1 \quad \quad \text{if } g\in K\\
                  -1 \text \quad \quad {otherwise}
                 \end{cases}$$
                 
Similar to $\hat{K}$, let $\hat{G}$ denote the set of irreducible representations of group $G$. Define 
$$\hat{G}(\sigma) = \{\theta \in \hat{G}~:~\sigma \preceq Res_{K}^{G}\theta\} = \{\theta \in \hat{G}~:~\theta \preceq Ind_{K}^{G}\sigma\}$$
where $\sigma \in \hat{K}$. Now we have following theorem
\begin{theorem}\cite[ Theorem 3.1]{ SST}\label{Clifford1}
Let $K$ be a subgroup of $G$ of index two. Then
\begin{enumerate}[(1).]
\item
If $I_{G}(\sigma) = K $, then $\theta := Ind_{K}^{G}(\sigma) \in \hat{G}, \theta \otimes \varepsilon = \theta$. 
\item
If $I_{G}(\sigma) = G $, then, for $\theta \in \hat{G}(\sigma)$, we have $Ind_{K}^{G}(\sigma) = \theta \oplus (\theta \otimes \varepsilon)$ with $\theta \not\sim \theta \otimes \varepsilon  $. 
\end{enumerate}
\end{theorem}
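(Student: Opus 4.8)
The plan is to prove the statement by the standard Clifford-theoretic toolkit for a normal subgroup of index two, using only three formal ingredients: Frobenius reciprocity, Mackey's restriction formula, and the projection (push--pull) formula $\mathrm{Ind}_{K}^{G}(V \otimes \mathrm{Res}_{K}^{G}W) \cong (\mathrm{Ind}_{K}^{G}V) \otimes W$. Fix $g_{0} \in G \setminus K$, so that $G/K = \{K, g_{0}K\}$ and conjugation by $g_{0}$ induces an involution on $\hat{K}$ sending $\sigma$ to $\prescript{g_{0}}{}{\sigma}$; note $I_{G}(\sigma) = G$ precisely when $\prescript{g_{0}}{}{\sigma} \sim \sigma$, and $I_{G}(\sigma) = K$ otherwise. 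Since $K$ is normal in $G$, Mackey's formula reduces to $\mathrm{Res}_{K}^{G}\mathrm{Ind}_{K}^{G}\sigma \cong \sigma \oplus \prescript{g_{0}}{}{\sigma}$, and combining with Frobenius reciprocity gives
$$\langle \mathrm{Ind}_{K}^{G}\sigma,\ \mathrm{Ind}_{K}^{G}\sigma\rangle_{G} = \langle \sigma,\ \sigma \oplus \prescript{g_{0}}{}{\sigma}\rangle_{K} = 1 + \langle\sigma,\ \prescript{g_{0}}{}{\sigma}\rangle_{K}.$$

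For part (1), when $I_{G}(\sigma) = K$ we have $\langle\sigma, \prescript{g_{0}}{}{\sigma}\rangle_{K} = 0$, so the displayed inner product equals $1$ and $\theta := \mathrm{Ind}_{K}^{G}\sigma$ is irreducible. The equality $\theta \otimes \varepsilon \cong \theta$ is then immediate from the projection formula: since $\mathrm{Res}_{K}^{G}\varepsilon$ is the trivial character of $K$, one gets $\theta \otimes \varepsilon = \mathrm{Ind}_{K}^{G}(\sigma \otimes \mathrm{Res}_{K}^{G}\varepsilon) = \mathrm{Ind}_{K}^{G}\sigma = \theta$. (This last identity in fact holds for any $\sigma$, irrespective of its inertia group, and I would record it separately since it is reused in part (2).)

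For part (2), when $I_{G}(\sigma) = G$ we have $\prescript{g_{0}}{}{\sigma} \sim \sigma$, so the inner product above equals $2$ and moreover $\mathrm{Res}_{K}^{G}\mathrm{Ind}_{K}^{G}\sigma \cong 2\sigma$. Take any $\theta \in \hat{G}(\sigma)$; by Frobenius reciprocity $\langle \theta, \mathrm{Ind}_{K}^{G}\sigma\rangle_{G} = \langle \mathrm{Res}_{K}^{G}\theta, \sigma\rangle_{K} \geq 1$, and since $\langle \mathrm{Ind}_{K}^{G}\sigma, \mathrm{Ind}_{K}^{G}\sigma\rangle_{G} = 2$ the multiplicity of $\theta$ in $\mathrm{Ind}_{K}^{G}\sigma$ must be exactly $1$. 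Hence $\mathrm{Ind}_{K}^{G}\sigma \cong \theta \oplus \theta'$ for a unique irreducible $\theta'$, and restricting to $K$ forces $\mathrm{Res}_{K}^{G}\theta \cong \mathrm{Res}_{K}^{G}\theta' \cong \sigma$, since each contains $\sigma$ and the two together make up $2\sigma$. Applying $\otimes\,\varepsilon$ and using $\mathrm{Ind}_{K}^{G}\sigma \otimes \varepsilon \cong \mathrm{Ind}_{K}^{G}\sigma$ shows $\{\theta \otimes \varepsilon,\ \theta' \otimes \varepsilon\} = \{\theta, \theta'\}$, so it remains only to exclude $\theta \otimes \varepsilon \cong \theta$. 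If that held, $\chi_{\theta}$ would vanish on $G \setminus K$, whence $\langle \mathrm{Res}_{K}^{G}\theta, \mathrm{Res}_{K}^{G}\theta\rangle_{K} = \tfrac{1}{|K|}\sum_{g \in G}|\chi_{\theta}(g)|^{2} = \tfrac{|G|}{|K|}\langle\theta,\theta\rangle_{G} = 2$, contradicting the irreducibility of $\mathrm{Res}_{K}^{G}\theta \cong \sigma$. Therefore $\theta \otimes \varepsilon \cong \theta' \not\sim \theta$ and $\mathrm{Ind}_{K}^{G}\sigma \cong \theta \oplus (\theta \otimes \varepsilon)$, as claimed.

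The only genuinely delicate step is this last one in part (2): ruling out a repeated summand $2\theta$, i.e., showing the two constituents of $\mathrm{Ind}_{K}^{G}\sigma$ are non-isomorphic; everything else is formal bookkeeping with Frobenius reciprocity, Mackey's formula, and the projection formula. I would present the character computation of $\sum_{g \in G}|\chi_{\theta}(g)|^{2}$ displayed above as the cleanest way to settle it, since it simultaneously pins down $\hat{G}(\sigma) = \{\theta, \theta \otimes \varepsilon\}$.
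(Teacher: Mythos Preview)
The paper does not supply its own proof of this theorem: it is quoted verbatim as \cite[Theorem 3.1]{SST} and used as a black box in Section~\ref{reps}. So there is no in-paper argument to compare against.

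Your argument is correct and is in fact the standard Clifford-theoretic proof one finds in the literature. The computation $\langle \mathrm{Ind}_{K}^{G}\sigma, \mathrm{Ind}_{K}^{G}\sigma\rangle_{G} = 1 + \langle\sigma, \prescript{g_{0}}{}{\sigma}\rangle_{K}$ via Mackey and Frobenius reciprocity cleanly handles irreducibility in both cases, and the projection formula immediately gives $\mathrm{Ind}_{K}^{G}\sigma \otimes \varepsilon \cong \mathrm{Ind}_{K}^{G}\sigma$. In part~(2), once the norm equals $2$ you already know $\mathrm{Ind}_{K}^{G}\sigma = \theta \oplus \theta'$ with $\theta \not\sim \theta'$ (since $\sum m_i^2 = 2$ forces two multiplicity-one summands), so the only remaining content is identifying $\theta' \cong \theta \otimes \varepsilon$. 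Your character computation showing that $\theta \otimes \varepsilon \cong \theta$ would force $\langle \mathrm{Res}_{K}^{G}\theta, \mathrm{Res}_{K}^{G}\theta\rangle_{K} = 2$, contradicting $\mathrm{Res}_{K}^{G}\theta \cong \sigma$, is the right way to close this; the intermediate step $\mathrm{Res}_{K}^{G}\theta \cong \sigma$ is justified by the dimension count you sketch. One small presentational point: since the norm-$2$ computation already rules out the possibility $\mathrm{Ind}_{K}^{G}\sigma \cong 2\theta$, your final paragraph slightly overstates what remains ``delicate''; the genuinely nontrivial step is only the identification $\theta' \cong \theta \otimes \varepsilon$, not the distinctness of the two summands.
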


Now we apply this theory to get induced representations of $Dih(H)$ from irreducible representations of $H$. We consider two cases for $H$.

Case(i). Let $H = \mathbb{Z}_{n_{1}} \times \mathbb{Z}_{n_{2}} \times \hdots \times \mathbb{Z}_{n_{k}} $ be abelian group with $gcd(n_i,2)=1$ and with $\mathbb{Z}_{n_i} = <r_i>$ for all $1\leq i \leq k$. Let $\phi_{{i}_{j}} : \mathbb{Z}_{n_{i}} \to \mathbb{C}^{*}$ be an irreducible representation of $\mathbb{Z}_{n_{i}}$ for all $0 \leq i_{j} \leq n_{j}-1$. An  irreducible representation of $H$ is given as follows
$$\phi_{ \left( i_1,i_2,\hdots,i_k \right)} :  \mathbb{Z}_{n_{1}} \times \mathbb{Z}_{n_{2}} \times \hdots \times \mathbb{Z}_{n_{k}} \to \mathbb{C}^* $$
given as 
$$\phi_{ \left( i_1,i_2,\hdots,i_k \right)}{\left(m_1,m_2,\hdots,m_k \right)} = \phi_{i_1}(m_1) \hdots \phi_{i_k}(m_k) $$
where $m_{{i}} \in \mathbb{Z}_{n_i}$ for all $1 \leq i \leq k$. 

More specifically, we can write
$$\phi_{ \left( i_1,i_2,\hdots,i_k \right)}{\left(m_1,m_2,\hdots,m_k \right)} = \prod_{j=1}^{k}{\omega}_{n_{j}}^{m_{j}{i_{j}}}$$
where  $0\leq i_{j} \leq n_{j}-1$, $1\leq j \leq k$ and  ${\omega}_{n_{j}} = e^{{{\bf{i}}2\pi}/n_{j}}$.
We observe that $H$ has only one irreducible representation namely trivial representation with real character value. All remaining irreducible representations of $H$ have complex valued characters.
We refer to these representations as real irreducible and complex irreducible representations of $H$ respectively.

We recall that $Dih(H) := \left\{ H,s ~|~s^2,shs^{-1} = h^{-1}, ~\forall ~h \in H \right\}$ is the finite presentation of generalised dihedral group. Let $\{1,s\}$ be the set of coset representatives of $H$ in $Dih(H)$. Let $\phi_{\left(i_1,i_2,\hdots,i_k \right)}$ denote the irreducible representation of $H$. Then by using theory of induction, we get induced representation $\rho_{\left(i_1,i_2,\hdots,i_k \right)}$ of $Dih(H)$ as below
$$ \rho_{\left(i_1,i_2,\hdots,i_k \right)}(r_{j})=\begin{pmatrix}
              {\omega_{n_{j}}}^{i_j} &  0\\
              0 &  {\omega_{n_{j}}}^{-i_j} 
             \end{pmatrix}       \hspace{2cm}  \rho_{\left(i_1,i_2,\hdots,i_k \right)}(s)=\begin{pmatrix}
              0 &  1\\
              1 &  0
             \end{pmatrix}  $$

where $0 \leq i_{j} \leq n_{j}-1$ and $1 \leq j \leq k$. Equivalently as real representations we have following

$$ \rho_{\left(i_1,i_2,\hdots,i_k \right)}(r_{j})=\begin{pmatrix}
              cos(2{\pi}i_{j}/n_{j}) &  -sin(2{\pi}i_{j}/n_{j})\\
              sin(2{\pi}i_{j}/n_{j}) &  cos(2{\pi}i_{j}/n_{j})
             \end{pmatrix}       \hspace{2cm}  \rho_{\left(i_1,i_2,\hdots,i_k \right)}(s)=\begin{pmatrix}
              1 &  0\\
              0 &  -1
             \end{pmatrix}  $$

where $0 \leq i_{j} \leq n_{j}-1$ and $1 \leq j \leq k$. 

Let $\overline{\phi}$ denote the complex conjugate representation of $\phi$ of group $H$ defined as $\overline{\phi}(h) = \overline{\phi(h)}$ where $h\in H$. Then we have following result for abelian groups. 
\begin{lemma}\label{Abelian} \cite[Ex. 4.2]{BS}
Let $H$ be an abelian group and $\phi$ be an irreducible representation of $H$. Then $\phi(h^{-1}) = \overline{\phi}(h)$ for all $h \in H$.
\end{lemma}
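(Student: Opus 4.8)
The plan is to reduce to the one-dimensional case and then use that characters of finite abelian groups take values in the unit circle. First I would invoke Schur's lemma: since $H$ is abelian, for any complex irreducible representation $(\phi, V)$ of $H$, each operator $\phi(h)$ commutes with all of $\phi(H)$ and hence, by Schur, acts as a scalar on the irreducible module $V$. Consequently every linear subspace of $V$ is $H$-invariant, which forces $\dim V = 1$. Thus $\phi$ may be identified with a group homomorphism $\phi : H \to \mathbb{C}^{*}$.

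Next I would exploit the finiteness of $H$ (which is the setting in which this lemma is applied throughout the paper). For $h \in H$ of order $m$, multiplicativity of $\phi$ gives $\phi(h)^{m} = \phi(h^{m}) = \phi(e) = 1$, so $\phi(h)$ is an $m$-th root of unity; in particular $|\phi(h)| = 1$, whence $\overline{\phi(h)} = \phi(h)^{-1}$.

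Finally, combining this with $\phi(h^{-1}) = \phi(h)^{-1}$ (again by multiplicativity of the homomorphism $\phi$) yields
$$\phi(h^{-1}) = \phi(h)^{-1} = \overline{\phi(h)} = \overline{\phi}(h)$$
for all $h \in H$, which is exactly the assertion.

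There is no genuine obstacle in this argument; the only points that need to be stated carefully are that an irreducible representation of an abelian group is one-dimensional and that the resulting character takes values on the unit circle, both of which are immediate from $H$ being a finite abelian group. If one wanted the statement for an arbitrary (possibly infinite) abelian group, one would instead restrict to finite-dimensional unitary representations, but that refinement is not needed here.
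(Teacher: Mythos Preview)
Your argument is correct. Note that the paper does not actually supply its own proof of this lemma: it is stated with a citation to \cite[Ex.~4.2]{BS} and used without further justification, so there is no ``paper's proof'' to compare against. Your proof --- Schur's lemma forces one-dimensionality over an abelian group, finiteness forces values on the unit circle, and then $\phi(h^{-1}) = \phi(h)^{-1} = \overline{\phi(h)}$ --- is exactly the standard argument one would give for this exercise and is entirely adequate here.
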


\begin{proposition}\label{For generalised}
Let $Dih(H) = H \rtimes \mathbb{Z}_{2}$ be generalised dihedral group. Then
\begin{enumerate}
\item
If $\phi$ is a real irreducible representation of $H$ then $I_{Dih(H)}(\phi) = Dih(H)$.

\item
If $\phi $ is complex irreducible representation of $H$ then $I_{Dih(H)}(\phi) = H$.

\end{enumerate}
\end{proposition}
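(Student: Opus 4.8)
The plan is to compute the inertia group $I_{Dih(H)}(\phi)$ directly from its definition, using the fact that $H$ has index two in $Dih(H)$, so that $\{1,s\}$ is a complete set of coset representatives and $I_{Dih(H)}(\phi)$ is either $H$ or all of $Dih(H)$. Since $\phi$ is fixed by conjugation by every element of $H$ (because $H$ is abelian), the only question is whether $\prescript{s}{}{\phi} \sim \phi$; equivalently, since these are one-dimensional representations of an abelian group, whether $\prescript{s}{}{\phi} = \phi$ as characters. Thus the whole computation reduces to identifying the $s$-conjugate $\prescript{s}{}{\phi}$.

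First I would observe that for $h \in H$ one has $s^{-1}hs = shs^{-1} = h^{-1}$ by the defining relation of $Dih(H)$, so $\prescript{s}{}{\phi}(h) = \phi(s^{-1}hs) = \phi(h^{-1})$. By Lemma \ref{Abelian} this equals $\overline{\phi}(h)$, i.e. $\prescript{s}{}{\phi} = \overline{\phi}$. Hence $s \in I_{Dih(H)}(\phi)$ if and only if $\overline{\phi} \sim \phi$, which for one-dimensional $\phi$ means $\overline{\phi} = \phi$, i.e. $\phi$ takes only real values.

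Now I would split into the two cases. If $\phi$ is a real irreducible representation of $H$, then $\overline{\phi} = \phi$, so $s \in I_{Dih(H)}(\phi)$, and since $I_{Dih(H)}(\phi) \supseteq H$ and $[Dih(H):H]=2$, we conclude $I_{Dih(H)}(\phi) = Dih(H)$, proving (1). If $\phi$ is complex irreducible, then by definition $\phi$ takes some non-real value, so $\overline{\phi} \neq \phi$, hence $s \notin I_{Dih(H)}(\phi)$; combined with $I_{Dih(H)}(\phi) \supseteq H$ this forces $I_{Dih(H)}(\phi) = H$, proving (2).

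I do not anticipate a serious obstacle here: the argument is essentially a two-line application of the relation $shs^{-1}=h^{-1}$ together with Lemma \ref{Abelian} and the index-two dichotomy for the inertia group. The only point requiring a word of care is the passage from "$\prescript{s}{}{\phi}\sim\phi$" to "$\prescript{s}{}{\phi}=\phi$", which is valid precisely because both are one-dimensional (so isomorphism of representations coincides with equality of characters); this uses the hypothesis in Case (i) that the relevant irreducible representations of $H$ are one-dimensional, which holds since $H$ is abelian.
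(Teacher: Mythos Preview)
Your proposal is correct and follows essentially the same approach as the paper's own proof: both reduce the question to checking whether $s\in I_{Dih(H)}(\phi)$, compute $\prescript{s}{}{\phi}(h)=\phi(h^{-1})=\overline{\phi(h)}$ via Lemma~\ref{Abelian}, and then split into the real and complex cases. Your write-up is slightly more explicit about why the index-two dichotomy applies and why ``$\sim$'' collapses to ``$=$'' in dimension one, but the underlying argument is identical.
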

\begin{proof}
Since $H$ is an abelian group, $\phi $ is one-dimensional representation. By definition we have $H \subset I_{Dih(H)}(\phi)$. It is enough to check whether $s \in I_{Dih(H)}(\phi)$. We get 
$$\prescript{s}{}{\phi}(h) = \phi(shs^{-1}) = \phi(h^{-1}) = \overline{\phi(h)}$$
where the last equality follows from Lemma \ref{Abelian}. If $\phi$ is real representation then $\overline{\phi(h)} = \phi(h)$. This implies $s \in I_{Dih(H)}(\phi)$ and so $I_{Dih(H)}(\phi) = Dih(H)$.

If $\phi$ is a complex irreducible representation then $\overline{\phi(h)} \neq \phi(h)$ and so $s \notin I_{Dih(H)}(\phi)$. This implies $I_{Dih(H)}(\phi) = H$.
\end{proof}
Let $\phi $ be trivial representation of $H$. As a consequence of Proposition \ref{For generalised} together with Theorem \ref{Clifford1}, we get that  $Ind_{H}^{Dih(H)}\phi$ splits into two degree one irreducible representations of $Dih(H)$. We call them $\rho_{triv}$ and $\rho_{sign}$~, where
$$\rho_{triv}(g) = 1 \quad \forall~g \in Dih(H)$$
and
$$\rho_{sign}(g)= \begin{cases}
                  1 \quad \quad \text{if } g\in H\\
                  -1 \quad \quad \text{if } g\in sH\\
                 \end{cases}$$

Now we are left with $|H|-1$ degree two irreducible representations of $Dih(H)$ induced from non trivial irreducible representations of $H$. However some of them are equivalent representations. We use the following theorem to identify equivalent representations among these induced representations of $Dih(H)$.

\begin{theorem}\cite[ Corollary 2.1]{SST}\label{Conjugate}
 Let $K$ be normal subgroup of $G$. Then $Ind_{K}^{G} \sigma $ is irreducible if and only if $I_{G}(\sigma) = K$. Moreover, if $\sigma, \sigma_1 \in \hat{K}$ and $I_{G}(\sigma) = I_{G}(\sigma_{1}) = K$ then $Ind_{K}^{G}(\sigma) \sim Ind_{K}^{G}(\sigma_1)$ if and only if $\sigma $ and $\sigma_1$ are conjugate (i.e. there exists $g \in G$ such that $\sigma_1 = \prescript{g}{}{\sigma}$). 
\end{theorem}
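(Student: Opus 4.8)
The plan is to derive both assertions from standard Clifford/Mackey theory, using crucially that $K\trianglelefteq G$. First I would record two preliminaries. (a) For $K$ normal the Mackey restriction formula collapses (the double coset space $K\backslash G/K$ becomes $G/K$ and each $K\cap gKg^{-1}$ is all of $K$) to
$$Res_{K}^{G}\,Ind_{K}^{G}\tau \;\cong\; \bigoplus_{gK\in G/K}\,\prescript{g}{}{\tau}\qquad(\tau\in\hat K);$$
in the index-two situation used elsewhere in the paper this is visible directly from the explicit model $W=V\oplus sV$, with $s$ swapping the two summands up to the twist $\prescript{s}{}{\tau}$. (b) For $k\in K$ the representation $\prescript{k}{}{\tau}$ is isomorphic to $\tau$ (intertwined by $\tau(k)$), so $I_{G}(\tau)$ is a subgroup of $G$ containing $K$, the condition ``$I_{G}(\sigma)=K$'' is the same as ``$\prescript{g}{}{\sigma}\not\sim\sigma$ for all $g\in G\setminus K$'', and $I_{G}(\sigma)$ is a union of left $K$-cosets, so $\#\{gK\in G/K:\prescript{g}{}{\sigma}\sim\sigma\}=|I_{G}(\sigma)/K|$.

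For the irreducibility criterion I would compute the endomorphism algebra dimension via Frobenius reciprocity and (a):
$$\dim\operatorname{Hom}_{G}\!\big(Ind_{K}^{G}\sigma,\,Ind_{K}^{G}\sigma\big)=\dim\operatorname{Hom}_{K}\!\big(\sigma,\,Res_{K}^{G}Ind_{K}^{G}\sigma\big)=\#\{gK\in G/K:\prescript{g}{}{\sigma}\sim\sigma\}=|I_{G}(\sigma)/K|,$$
where the middle equality uses that $\sigma$ is irreducible together with Schur's lemma ($\operatorname{Hom}_K(\sigma,\prescript{g}{}{\sigma})$ is one-dimensional if $\prescript{g}{}{\sigma}\sim\sigma$ and $0$ otherwise). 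Since a finite-dimensional complex representation is irreducible exactly when its endomorphism algebra is one-dimensional, $Ind_{K}^{G}\sigma$ is irreducible iff $|I_{G}(\sigma)/K|=1$, i.e. iff $I_{G}(\sigma)=K$.

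For the ``moreover'' part I would run the same computation with two characters. Assuming $I_{G}(\sigma)=I_{G}(\sigma_{1})=K$, so that both inductions are irreducible by the previous step, Frobenius reciprocity and (a) give
$$\dim\operatorname{Hom}_{G}\!\big(Ind_{K}^{G}\sigma,\,Ind_{K}^{G}\sigma_{1}\big)=\dim\operatorname{Hom}_{K}\!\big(\sigma,\,Res_{K}^{G}Ind_{K}^{G}\sigma_{1}\big)=\#\{gK\in G/K:\prescript{g}{}{\sigma_{1}}\sim\sigma\}.$$
Two irreducible complex $G$-representations are isomorphic iff this Hom-space is nonzero (Schur again), so $Ind_{K}^{G}\sigma\sim Ind_{K}^{G}\sigma_{1}$ if and only if $\prescript{g}{}{\sigma_{1}}\sim\sigma$ for some $g\in G$, which is precisely the asserted $G$-conjugacy of $\sigma$ and $\sigma_{1}$.

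The only genuinely non-formal ingredient is the Mackey restriction formula (a); in the index-two setting actually needed in this paper it can be checked by hand from the induced module, so I expect no serious obstacle. The one point to handle carefully is the bookkeeping that $\dim\operatorname{Hom}_K(\sigma,\prescript{g}{}{\tau})$ equals the multiplicity ($0$ or $1$) of $\sigma$ in $\prescript{g}{}{\tau}$ — this is where irreducibility of $\sigma$ enters, and it is exactly what makes the two counting arguments above return the inertia index $|I_G(\sigma)/K|$ rather than something larger.
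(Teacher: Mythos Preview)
Your argument is correct and is the standard Mackey/Clifford derivation of this result: the Mackey restriction formula for a normal subgroup, Frobenius reciprocity, and Schur's lemma combine exactly as you say to give $\dim\operatorname{Hom}_{G}(Ind_{K}^{G}\sigma,Ind_{K}^{G}\sigma_{1})=\#\{gK:\prescript{g}{}{\sigma_{1}}\sim\sigma\}$, from which both clauses follow.

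Note, however, that the paper does not supply its own proof of this theorem: it is quoted verbatim as \cite[Corollary 2.1]{SST} and used as a black box. So there is no ``paper's proof'' to compare against; you have in effect reproduced (a clean version of) the argument that the cited reference gives. Your remark that in the index-two case the Mackey formula can be read off directly from the induced module $W=V\oplus sV$ is apt and matches how the paper actually deploys the result (only with $[G:K]=2$).
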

Now we give following proposition for $Dih(H)$.

\begin{proposition}\label{Conjugategeneralised}
Let $Dih(H)$ be generalised dihedral group. Let $\phi$ be complex irreducible representation of $H$. Then $\phi$ and $\overline{\phi}$ are conjugate representations.
\end{proposition}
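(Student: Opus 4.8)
The plan is to exhibit a single explicit element of $Dih(H)$ that conjugates $\phi$ to $\overline{\phi}$; in fact the required computation has already been carried out inside the proof of Proposition \ref{For generalised}, and here it only needs to be read off in the language of conjugate representations. I would take $g = s$. Since $s^2 = 1$ in $Dih(H)$ we have $s^{-1} = s$, and the defining relation $shs^{-1} = h^{-1}$ then gives, for every $h \in H$,
\[
\prescript{s}{}{\phi}(h) \;=\; \phi(s^{-1}hs) \;=\; \phi(shs^{-1}) \;=\; \phi(h^{-1}).
\]
By Lemma \ref{Abelian} we have $\phi(h^{-1}) = \overline{\phi}(h)$ for all $h \in H$, so $\prescript{s}{}{\phi} = \overline{\phi}$ as representations of $H$. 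In other words $\overline{\phi}$ is precisely the $s$-conjugate of $\phi$, which is exactly the assertion that $\phi$ and $\overline{\phi}$ are conjugate representations of $H$ under the action of $Dih(H)$.

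A couple of remarks I would include for completeness. First, the conjugating element $s$ lies outside $H$, which is consistent with — and indeed is the same computation underlying — the fact proved in Proposition \ref{For generalised} that a complex irreducible representation of $H$ has inertia group exactly $H$ (so $\phi$ and $\overline{\phi}$ are conjugate but \emph{not} equivalent as $H$-representations). Second, because $H$ is abelian, $\phi$ is one-dimensional, so "conjugacy of representations" here is literally an equality of the two one-dimensional characters $\prescript{s}{}{\phi}$ and $\overline{\phi}$; no choice of intertwining operator is required.

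Since the entire argument is a one-line substitution using an already-established lemma together with a defining relation of the presentation, there is essentially no obstacle. The only thing to be careful about is the bookkeeping: matching the convention $\prescript{g}{}{\sigma}(h) = \sigma(g^{-1}hg)$ with the relation $shs^{-1} = h^{-1}$, and using that $s$ is an involution so that $s^{-1}hs = shs^{-1} = h^{-1}$. With that convention fixed, the proof is immediate.
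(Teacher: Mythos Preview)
Your proof is correct and is essentially identical to the paper's own argument: both take $g=s$, use the relation $shs^{-1}=h^{-1}$ together with Lemma~\ref{Abelian} to conclude $\prescript{s}{}{\phi}=\overline{\phi}$. You are simply more explicit about the bookkeeping step $s^{-1}=s$ and add some contextual remarks, but the mathematical content is the same one-line computation.
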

\begin{proof}
We compute $\prescript{s}{}{\phi}(h) = \phi(shs^{-1}) = \phi(h^{-1}) = \overline{\phi(h)} = \overline{\phi}(h)$. Therefore $\phi$ and $\overline{\phi}$ are conjugate representations of $H$.
\end{proof}
As a consequence of Theorem \ref{Conjugate} and Proposition \ref{Conjugategeneralised}, we get that  generalised dihedral group $Dih(H)$ has $(|H|-1)/2$ inequivalent degree two irreducible representations.
We denote these inequivalent degree two representations of $Dih(H)$ with $\psi_{j}$ where $1\leq j \leq (|H|-1)/2$.

Case(ii). Let $H= \mathbb{Z}_{n_{1}} \times \hdots \mathbb{Z}_{n_{k}} \times \mathbb{Z}_{m_{1}} \times \hdots \mathbb{Z}_{m_{d}} $ be an abelian group where $gcd(n_i,2)=2$ for $1\leq i\leq k$ and $ gcd(m_j,2)=1$ for $1\leq j \leq d$.

In this case, each $\mathbb{Z}_{n_i}$ has two real irreducible representations for $1\leq i \leq k$. So group $H$ has $2^{k}$ real irreducible representations. Now by using same approach as in odd case, we get that the group $Dih(H)$ has $2^{k+1}$ degree one  inequivalent irreducible representations and $(|H|-2^{k})/2$ degree two inequivalent irreducible representations. Note that out of these $2^{k+1}$ degree one representations, there are $2^{k}$ representations which map $s$ to  $-1$ and remaining $2^{k}$ representations  map $s$ to $1$.

We also require following theorem from \cite{JL} for decomposition of $K[T]$ as direct sum of right simple ideals.

\begin{theorem} \cite[Th. 14.17]{JL}. \label{JL}
Let $\chi_1,\hdots,\chi_k$ be complete set of irreducible characters of $G$. If $\psi$ is any character of $G$, then
$$\psi = d_1\chi_1+\hdots+d_k\chi_k$$
for some non-negative integers $d_1,d_2,\hdots,d_k$. Moreover,
$$d_i=~<\psi,\chi_i>~\text{ for } 1\leq i \leq k, \text{and}$$
$$<\psi,\psi>~ = \sum_{i=1}^{k}{d_{i}}^{2}$$

\end{theorem}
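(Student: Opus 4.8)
The plan is to deduce this purely from two classical facts about the character theory of finite groups over a field of characteristic zero: Maschke's theorem (complete reducibility of $KG$-modules) and the first orthogonality relations $\langle \chi_i, \chi_j \rangle = \delta_{ij}$ for the irreducible characters. Both are standard (see \cite{JL}), and the argument is essentially bookkeeping on top of them.

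First I would use that a character, by definition, is afforded by an actual module: write $\psi = \chi_V$ for some $KG$-module $V$. By Maschke's theorem $V$ is semisimple, hence a direct sum of irreducible submodules; grouping the isomorphic ones gives $V \cong \bigoplus_{i=1}^{k} W_i^{\oplus d_i}$, where $W_1, \dots, W_k$ run over the isomorphism classes of irreducible $KG$-modules, $\chi_i$ is the character of $W_i$, and $d_i \in \mathbb{Z}_{\geq 0}$ is the multiplicity with which $W_i$ occurs in $V$. Since taking characters is additive over direct sums, this already yields $\psi = d_1\chi_1 + \dots + d_k\chi_k$ with non-negative integer coefficients, which is the first assertion.

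To pin down the $d_i$, I would pair $\psi = \sum_i d_i\chi_i$ with a fixed $\chi_j$ and use sesquilinearity of $\langle\ ,\ \rangle$ together with orthonormality of the $\chi_i$, obtaining $\langle \psi, \chi_j \rangle = \sum_i d_i \langle \chi_i, \chi_j \rangle = d_j$; the same computation gives $\langle \psi, \psi \rangle = \sum_{i,j} d_i d_j \langle \chi_i, \chi_j \rangle = \sum_i d_i^2$, which is the remaining claim. There is no genuine obstacle here: the entire substance sits in the two background results invoked, and the only point worth a remark is the ground field — the statement is cleanest over $\mathbb{C}$, but for the groups relevant to this paper (generalised dihedral groups, whose complex irreducibles are all realizable over $\mathbb{R}$ by the explicit matrices of Section \ref{reps}) the case $K = \mathbb{R}$ presents no additional difficulty.
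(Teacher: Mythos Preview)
Your proof is correct and is essentially the standard argument one finds in any textbook on character theory. Note, however, that the paper does not actually give its own proof of this statement: Theorem~\ref{JL} is quoted from \cite[Th.~14.17]{JL} as a background result and is used without proof. So there is nothing in the paper to compare against; your argument simply supplies the (standard) justification that the authors outsourced to the reference.
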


\section{Decomposition of Quandle rings of Takasaki Quandles} \label{s6}
In this section, we discuss decomposition of quandle rings of Takasaki quandles. We divide this section into three parts. We begin with Takasaki quandle of abelian group $H$ of odd order:
\subsection{When $H$ is an abelian group of odd order}

Let $(T,\triangleright)$ be Takasaki quandle for abelian group $H = \mathbb{Z}_{n_{1}} \times \mathbb{Z}_{n_{2}} \times \hdots \times \mathbb{Z}_{n_{k}}$ with $gcd(n_i,2) = 1, ~\forall~ i=1,2,\hdots k$. We observe that $T$ itself is an orbit under the action of $Inn(T)$.

Let $K=\mathbb{R}$ or $\mathbb{C}.$ We identify $T$ with $\{v_0, v_1, v_2, \dots , v_{|T|-1}\} \subseteq K[T]$, then $T$ can be considered as basis of $K[T]$. Now clearly the module generated by  $v_{triv}= \sum_{i=0}^{|T|-1} v_{i}$ is simple module over $K[T]$. We denote this module with $V_{triv}.$
 
Recall $Inn(T)$ is the inner automorphism group generated by right multiplication maps $R_{j}~,j \in T$. Consider the representation $\rho_{T}: {\rm Inn}(T)\to {\rm GL}(K[T])$ defined by $\rho(R_j)(v_i)=v_{i\triangleright j}$, for all $R_j\in {\rm Inn}(T).$ Let $\chi_{T}$ be the character associated to $\rho_{T}$. We identify $Inn(T)$ with $Dih(H):=\{H,s~|~s^2=1,shs^{-1}=h^{-1};~h \in H\}$ where $ < R_{e_{i}}R_{e_{0}}~|~ 1 \leq i \leq k >$ is identified as abelian group $H$ generated by set $\{r_{i}~|~ 1 \leq i \leq k\}$  and  $R_{e_{0}}$ with $s$.

\begin{lemma}\label{CharacterOdd}
With the same notations as discussed above, we have
$$\chi_{T}(g)= \begin{cases}
                  |H| \quad \quad \text{ if } g=1\\
                  1 \quad \quad \text { if } g \notin H \\
                  0 \text \quad \quad \text{ if } g \in H
                 \end{cases}$$
\end{lemma}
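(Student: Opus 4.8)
**The plan is to compute the character $\chi_T$ directly from the permutation action, using the identification of $\mathrm{Inn}(T)$ with $Dih(H)$ and the explicit form of the right multiplication maps.** Since $\rho_T$ is the permutation representation of $\mathrm{Inn}(T)$ on the basis $\{v_0,\dots,v_{|T|-1\}}$ coming from the action $R_j(i)=i\triangleright j$, the standard fact is that $\chi_T(g)$ equals the number of fixed points of $g$ acting on $T$. So the entire proof reduces to counting fixed points of each element $g\in\mathrm{Inn}(T)$, split into the three cases $g=1$, $g\in H$, and $g\notin H$ (i.e.\ $g\in sH$).

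First I would dispose of the trivial case: $g=1$ fixes all $|H|$ elements of $T$, giving $\chi_T(1)=|H|$. Next, for $g\in H$, recall that under the identification $H$ corresponds to the subgroup generated by the $R_{e_i}R_{e_0}$, and by Lemma~\ref{Remark1} a general element of this subgroup has the form $\prod_{i}(R_{e_i}R_{e_0})^{a_i}$, which by the computation in the proof of Lemma~\ref{Order} acts on $(j_1,\dots,j_k)$ by $(j_1,\dots,j_k)\mapsto(j_1+2a_1,\dots,j_k+2a_k)$ — a pure translation by $2a=(2a_1,\dots,2a_k)$. Since $H$ has odd order, multiplication by $2$ is a bijection on $H$, so this translation is the identity precisely when all $a_i\equiv 0$, i.e.\ when $g=1$; otherwise it is a fixed-point-free permutation. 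Hence $\chi_T(g)=0$ for every nonidentity $g\in H$.

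The remaining case $g\in sH$ is the one requiring a little care. Such a $g$ corresponds (via Lemma~\ref{Remark1}) to an actual right multiplication map $R_{(j_1,\dots,j_k)} = \left(\prod_i (R_{e_i}R_{e_0})^{j_i}\right)R_{e_0}$, which acts on $T$ by $a\mapsto 2j-a$. A fixed point is an element $a\in H$ with $2j-a=a$, i.e.\ $2a=2j$, i.e.\ $2(a-j)=0$; since $|H|$ is odd this forces $a=j$. So each element of $sH$ has exactly one fixed point, giving $\chi_T(g)=1$ for all $g\notin H$. Assembling the three cases yields the claimed formula.

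**The main obstacle is conceptual rather than computational:** one must make sure the identification of $\mathrm{Inn}(T)$ with $Dih(H)$ is used correctly, in particular that the coset $sH$ really does consist exactly of the right multiplication maps $R_j$ (the ``reflections''), while $H$ consists of the products of two such maps (the ``rotations'', which are translations by even elements). This is exactly what Lemmas~\ref{Remark1}, \ref{Order}, and \ref{InnerAutomorphismGroup} provide, so once those are invoked the fixed-point counts are immediate, each hinging on the single arithmetic fact that $x\mapsto 2x$ is a bijection on an odd-order abelian group.
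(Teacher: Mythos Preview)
Your proof is correct and follows essentially the same approach as the paper: both identify $\mathrm{Inn}(T)$ with $Dih(H)$ via Lemmas~\ref{InnerAutomorphismGroup} and~\ref{Remark1} and then compute $\chi_T$ by counting fixed points of the permutation action on $T$. The paper's version is terser, merely listing the conjugacy classes and invoking the earlier lemmas, whereas you spell out the fixed-point counts explicitly (translations $a\mapsto a+2b$ for $g\in H$ and reflections $a\mapsto 2j-a$ for $g\in sH$), using in each case that $x\mapsto 2x$ is a bijection on an odd-order group.
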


\begin{proof}
 Let $r_{i}$ be the generator of component $\mathbb{Z}_{n_{i}}$ of $H$. The group $Dih(H)$ has $(|H|+3)/2$ conjugacy classes,namely
$$\{1\}, \quad \{r_{1}^{i_{1}}\hdots r_{k}^{i_{k}},r_{1}^{-i_{1}}\hdots r_{k}^{-i_{k}}~|~1\leq i_{j} \leq (n_{j}-1)/2;~j=1,2,\hdots,k\}; \quad \{ hs; h \in H\}$$
Using the proof of Lemma \ref{InnerAutomorphismGroup} and Lemma \ref{Remark1},
$$\chi_{T}(1) = |H|,\quad \chi_{T}(h) = 0,\quad \chi_{T}(hs) = 1$$
for all $h \in H$. 
\end{proof}

The set $\{ \psi_j ~|~  1\leq j \leq (|H|-1)/2 \}$ is a complete set of inequivalent irreducible representations of $Dih(H)$ with degree two and let $V_{j}$ denote the simple module corresponding to $\psi_{j}$.

\begin{lemma}\label{InnerProductOdd}
 Let $k= (|H|-1)/2$. Then for each integer $j$ such that $1\leq j\leq k,$ the representation $\psi_{j}$ appears in the decomposition of $\rho_{T}$ with multiplicity $1.$  
\end{lemma}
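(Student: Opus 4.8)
The plan is to compute the inner product $\langle \chi_T, \chi_{\psi_j}\rangle$ directly using the character formula from Lemma \ref{CharacterOdd} and the orthogonality/character-theoretic machinery recorded in Theorem \ref{JL}. Since $\chi_T$ vanishes on all non-identity elements of $H$, takes the value $1$ on the coset $sH$, and equals $|H|$ at the identity, the inner product $\langle \chi_T, \chi_{\psi_j}\rangle = \frac{1}{|Dih(H)|}\sum_{g} \chi_T(g)\overline{\chi_{\psi_j}(g)}$ collapses to a sum over $\{1\}$ and over the elements $hs$, $h \in H$. Using the explicit matrices for $\psi_j = \rho_{(i_1,\dots,i_k)}$ given in Case (i) of Section \ref{reps}, one has $\chi_{\psi_j}(1) = 2$ and $\chi_{\psi_j}(hs) = 0$ for every $h\in H$ (the product of a diagonal matrix with the antidiagonal matrix $\rho(s)$ is antidiagonal, hence has trace zero). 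Therefore
\[
\langle \chi_T, \chi_{\psi_j}\rangle = \frac{1}{2|H|}\Bigl(|H|\cdot 2 + \sum_{h\in H} 1\cdot 0\Bigr) = \frac{2|H|}{2|H|} = 1,
\]
which is exactly the claimed multiplicity.

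First I would recall the conjugacy class data for $Dih(H)$ in the odd case, already assembled in the proof of Lemma \ref{CharacterOdd}: one class $\{1\}$, the classes $\{h, h^{-1}\}$ for $h \in H\setminus\{1\}$, and the single class $\{hs : h \in H\}$ of reflections. Second I would evaluate $\chi_{\psi_j}$ on each of these classes, the only nontrivial point being that $\chi_{\psi_j}$ is supported on $H$ (degree-two induced representations from $H$ always send reflections to trace-zero matrices). Third, I would substitute the two character functions into the inner product formula and observe the cancellation leaves a single term coming from the identity. Finally, by Theorem \ref{JL} the multiplicity of $\psi_j$ in $\rho_T$ is precisely this inner product, namely $1$.

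**The main obstacle** is essentially bookkeeping rather than any conceptual difficulty: one must be careful that the formula for $\chi_T$ on $hs$ is the constant $1$ (not something $h$-dependent), and that $\chi_{\psi_j}(h) $ need not be recorded at all because it gets multiplied against $\chi_T(h) = 0$ — so the potentially messy sum of roots of unity $\sum_h \chi_{\psi_j}(h)$ never enters. It is also worth noting in passing (or as a sanity check using the second identity in Theorem \ref{JL}) that $\langle \chi_T, \chi_T\rangle = \frac{1}{2|H|}(|H|^2 + |H|\cdot 1) = \frac{|H|+1}{2}$, which should equal the sum of squares of all the multiplicities; since there are $(|H|-1)/2$ two-dimensional irreducibles each appearing once, together with the multiplicities of $\rho_{triv}$ and $\rho_{sign}$, this gives $\tfrac{|H|-1}{2}\cdot 1^2 + 1^2 + 0^2 = \tfrac{|H|+1}{2}$, confirming that $\rho_T \cong \rho_{triv}\oplus\bigoplus_{j}\psi_j$ with each $\psi_j$ of multiplicity exactly one and $\rho_{sign}$ absent.
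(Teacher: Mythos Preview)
Your proposal is correct and follows essentially the same approach as the paper: compute the inner product $\langle \chi_T, \chi_j\rangle$ using the character values from Lemma \ref{CharacterOdd} and the description of the $\psi_j$ from Section \ref{reps}, then invoke Theorem \ref{JL}. The paper's proof is terser, simply asserting that the inner product equals $1$, while you spell out explicitly why $\chi_{\psi_j}(hs)=0$ and add the consistency check via $\langle \chi_T,\chi_T\rangle$, but the underlying argument is identical.
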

\begin{proof}
 Let $\chi_j$ denote the character associated to the irreducible representation $\psi_{j}$ of $Dih(H).$ We recall that $\chi_{T}$ is character associated to $\rho_{T}$. For $1\leq j\leq k,$ using Lemma \ref{CharacterOdd} and section \ref{reps}, we compute
  $$ \langle \chi_{T}, \chi_j \rangle = \frac{1}{2|H|}\sum_{g\in Dih(H)} \chi_{T}(g) \chi_j(g)
  = 1.$$
Now the result follows from Theorem \ref{JL}.
\end{proof}

\begin{theorem} \label{OddFinal}
 Let $K= \mathbb{R}$ or $\mathbb{C}.$ Let $T$ be the Takasaki quandle for abelian group  $H = \mathbb{Z}_{n_{1}} \times \mathbb{Z}_{n_{2}} \times \hdots \times \mathbb{Z}_{n_{k}}$ with $gcd(n_i,2) = 1 ~\forall~ i=1,2,\hdots k$.
Then
$$K[T]= V_{triv}  \oplus \bigoplus_{j=1}^{(|H|-1)/2} V_{j}.$$

\end{theorem}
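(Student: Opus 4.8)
The plan is to compute the inner product of the character $\chi_T$ of the permutation representation $\rho_T$ with itself, show it equals the predicted number of summands (counted with multiplicity), and conclude via Theorem \ref{JL} that $\rho_T$ decomposes as claimed. Concretely, I would first recall that $V_{triv}$ is already known (from the discussion preceding Lemma \ref{CharacterOdd}) to be a simple submodule of $K[T]$ affording the trivial representation $\rho_{triv}$, so the trivial character $\chi_{triv}$ appears in $\chi_T$. The content of Lemma \ref{InnerProductOdd} is that each of the $(|H|-1)/2$ degree-two irreducibles $\psi_j$ appears with multiplicity exactly $1$. So I would just need to verify that these account for everything: that is, I would check
\[
\langle \chi_T, \chi_T \rangle = 1 + \sum_{j=1}^{(|H|-1)/2} 1^2 = 1 + \frac{|H|-1}{2},
\]
and that $\langle \chi_T, \chi_{triv}\rangle = 1$ while $\langle \chi_T, \chi_{sign}\rangle = 0$.

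The key computation uses Lemma \ref{CharacterOdd}: $\chi_T(1) = |H|$, $\chi_T(h) = 0$ for $h \in H \setminus\{1\}$, and $\chi_T(hs) = 1$ for all $h \in H$. Hence
\[
\langle \chi_T, \chi_T\rangle = \frac{1}{2|H|}\left( |H|^2 + \sum_{h \in H} 1^2 \right) = \frac{1}{2|H|}\left(|H|^2 + |H|\right) = \frac{|H|+1}{2}.
\]
This matches $1 + (|H|-1)/2$. Similarly $\langle \chi_T, \chi_{triv}\rangle = \frac{1}{2|H|}(|H| + |H|) = 1$ and $\langle \chi_T, \chi_{sign}\rangle = \frac{1}{2|H|}(|H| - |H|) = 0$, using that $\chi_{sign}$ is $1$ on $H$ and $-1$ on $sH$. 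By Theorem \ref{JL}, writing $\chi_T = \sum d_i \chi_i$ over the complete set of irreducible characters of $Dih(H)$ (namely $\chi_{triv}$, $\chi_{sign}$, and the $\chi_j$ for $1 \le j \le (|H|-1)/2$), we know $d_{triv} = 1$, $d_{sign} = 0$, and each $d_j \ge 1$ from Lemma \ref{InnerProductOdd}; since $\sum d_i^2 = \langle \chi_T,\chi_T\rangle = 1 + (|H|-1)/2$ forces each $d_j = 1$ exactly and no other irreducible contributes, we obtain $\chi_T = \chi_{triv} + \sum_{j=1}^{(|H|-1)/2}\chi_j$.

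Finally I would translate this character identity back into the module decomposition. Since $K[T]$, as a $K[Inn(T)]$-module, is identified with $\rho_T$, and decomposing $\rho_T$ into irreducibles is equivalent (as recalled in Section \ref{s2}) to decomposing $K[T]$ into right simple ideals, the equality of characters gives $K[T] = V_{triv} \oplus \bigoplus_{j=1}^{(|H|-1)/2} V_j$ as claimed, where $V_{triv}$ is the already-identified copy of $\rho_{triv}$ and each $V_j$ is the simple submodule affording $\psi_j$.

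The main obstacle, such as it is, is bookkeeping rather than mathematics: one must be careful that the list of irreducibles of $Dih(H)$ used here is genuinely complete (this was established in Section \ref{reps}: two linear characters $\rho_{triv}, \rho_{sign}$ and $(|H|-1)/2$ degree-two characters, with $1 + 1 + 4\cdot\frac{|H|-1}{2} = 2|H| = |Dih(H)|$ confirming completeness), and that the specific realization $\rho_T(R_j)(v_i) = v_{i\triangleright j}$ together with the identification $Inn(T) \cong Dih(H)$ is exactly the one under which Lemma \ref{CharacterOdd} was proved. Given those, no genuinely hard step remains; the result is a direct consequence of Lemma \ref{InnerProductOdd}, the self-inner-product computation, and Theorem \ref{JL}.
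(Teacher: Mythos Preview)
Your argument is correct, but it does a bit more work than the paper's. Both proofs rely on Lemma~\ref{InnerProductOdd} to establish that each degree-two irreducible $\psi_j$ occurs in $\rho_T$ with multiplicity exactly one. From there, however, the paper finishes by a straightforward dimension count: the $(|H|-1)/2$ two-dimensional summands together with the one-dimensional $V_{triv}$ account for $1 + 2\cdot\frac{|H|-1}{2} = |H| = \dim K[T]$, so nothing else can appear. You instead compute $\langle \chi_T, \chi_T\rangle$ and the inner products with $\chi_{triv}$ and $\chi_{sign}$, then invoke the identity $\langle \chi_T,\chi_T\rangle = \sum d_i^2$ from Theorem~\ref{JL} to pin down the multiplicities. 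This is entirely valid and has the minor advantage of verifying explicitly that $\rho_{sign}$ does not occur, but it is redundant once you already know each $d_j = 1$ from Lemma~\ref{InnerProductOdd} (note that the lemma gives equality, not just $d_j \geq 1$ as you wrote). The paper's dimension count is the shorter route to the same conclusion.
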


\begin{proof}
 It follows from the Lemma \ref{InnerProductOdd}, that if $\psi_j : Dih(H) \to {\rm GL}(V_j)$ is an irreducible representation of degree two, then $V_j$ is a simple $K[T]$-module with multiplicity $1.$ We denote this module by $V_{j}.$

Since $H$ is an abelian group of odd order and hence $H \rtimes \mathbb{Z}_{2}$ has $(|H|-1)/2$ inequivalent irreducible representations $\psi_j$ of degree two. Therefore $K[T]$ has $(|H|-1)/2$ distinct simple $K[T]$-modules $V_{j}$ for $1\leq j\leq (|H|-1)/2.$ Let $V_{triv}$ be one dimensional $K[T]$-module generated by  $v_{triv}.$ By dimension count, we get 
$$K[T]= V_{triv} \oplus \bigoplus_{j=1}^{(|H|-1)/2} V_{j}.$$
 
\end{proof}
\subsection{When H is an abelian group of even order}
Let $(T,\triangleright)$ be Takasaki quandle for abelian group $ H = \mathbb{Z}_{2n_1}\times \hdots \times \mathbb{Z}_{2n_{k}} \times \mathbb{Z}_{2n'_{1}} \times \hdots \times \mathbb{Z}_{2n'_{k'}} \times \mathbb{Z}_{m'_{1}} \times \hdots \times \mathbb{Z}_{m'_{d'}}$, where $gcd(n_{i},2) = 2$, $\forall$ $1\leq i \leq k$ and $gcd(n'_{i},2) = 1$, $\forall$ $1\leq i \leq k'$ and $gcd(m'_{i},2) = 1$, $\forall$ $1\leq i \leq d'$ except the case $H = \underbrace{\mathbb{Z}_{2} \times \hdots \mathbb{Z}_{2}}_\text{k~\text{ times }} \times \underbrace{\mathbb{Z}_{4} \times \hdots \times \mathbb{Z}_{4}}_\text{k'~times}$.

Let $X_{0}$ represent the orbit obtained by direct product of even orbits of dihedral quandles formed from each cyclic component of abelian group $H$. Then from Lemma \ref{Even Orbit}, we have $ Inn(X_0) =  \left( \mathbb{Z}_{n_1}\times \hdots \times \mathbb{Z}_{n_{k}} \times \mathbb{Z}_{m_{1}} \times \hdots \times \mathbb{Z}_{m_{d}} \right) \rtimes \mathbb{Z}_{2}$ where $gcd(n_{i},2)=2$ for $1\leq i\leq k$ and $gcd(m_{i},2) = 1$ for $1\leq i \leq d$. where $d=d'+k'$. Let $\tilde{H} = 2H$. We observe that $Inn(X_{0}) \cong Dih(\tilde{H})$.

Recall $Inn(X_{0})$ is the inner automorphism group generated by right multiplication maps $T_{j}~,j \in T$ where $T_{j} = R_{j}|_{X_{0}}$. Now we decompose $K[X_{0}]$ into direct sum of right simple ideals. Consider the representation $\rho_{X_0} : Inn(X_0) \to GL(K[X_0])$ defined by $\rho(T_{j})(v_{i}) = v_{i \triangleright j}$ for all $T_{j} \in Inn(X_0)$. Let $\chi_{X_0}$ be the character associated to $\rho_{X_0}$.

Let $Z$ be the set of right multiplication maps $T_{(a_1,a_2,\hdots,a_k,b_1,b_2,\hdots,b_{k'},c_1,c_2,\hdots,c_{d'})}$ where the co-ordinates have following properties
$$= \begin{cases}
                 0 \leq a_{i} < n_{i}, gcd(a_i,2)=2 \text{ or } 0 \quad \quad \text{ for } 1\leq i\leq k \\
                 0 \leq b_{i} < n'_{i}, \quad \quad \text{ for } 1\leq i\leq k'\\
                  0\leq c_{i} < m'_{i},\quad \quad \text{ for } 1\leq i \leq d' \\
                  
                 \end{cases}$$

 We identify $Inn(X_{0})$ with $Dih(\tilde{H}):=\{\tilde{H},s~|~s^2=1,shs^{-1}=h^{-1};~h \in \tilde{H}\}$ where $ < T_{e_{i}}T_{e_{0}}~|~ 1 \leq i \leq k+d >$ is identified as abelian group $\tilde{H}$ generated by set $\{r_{i}~|~ 1 \leq i \leq k+d\}$  and  $T_{e_{0}}$ with $s$. We note that out of elements of group $Inn(X_0)$, only elements of set $Z$ have fixed points belonging to orbit $X_{0}$. The size of set $Z$ is $|T|/(2^{2k+k'}) = |Dih(\tilde{H})|/2^{k+1}$. We observe from section \ref{Cl} that group $Dih(\tilde{H})$ have some conjugacy classes of size $|Dih(\tilde{H})|/2^{k+1}$. 
\begin{lemma}\label{CharacterEven}
With the same notations as above, we have
$$\chi_{X_0}(g)= \begin{cases}
                  |X_0| \quad \quad \text{if } g=1\\
                  2^k \quad \quad \text { if } g \in Cl(s)\\
                  0 \text \quad \quad {otherwise}
                 \end{cases}$$
\end{lemma}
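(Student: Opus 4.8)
The plan is to compute the character $\chi_{X_0}$ directly as a permutation character, exactly as was done for $\chi_T$ in Lemma~\ref{CharacterOdd}, but now being careful about which group elements have fixed points inside the orbit $X_0$. Recall that $\rho_{X_0}$ is the permutation representation of $Inn(X_0) \cong Dih(\tilde H)$ on the basis $\{v_i\}_{i \in X_0}$, so for any $g \in Inn(X_0)$ the value $\chi_{X_0}(g)$ equals the number of elements of $X_0$ fixed by $g$ (viewing $g$ as a permutation of $X_0$ via the restricted right multiplication maps $T_j = R_j|_{X_0}$ and their products).

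First I would dispose of $g = 1$: every one of the $|X_0|$ basis vectors is fixed, giving $\chi_{X_0}(1) = |X_0|$. Next, for $g \in Dih(\tilde H) \setminus \tilde H$ — that is, $g$ of the form $(\text{product of }T_{e_i}T_{e_0}\text{'s})\,T_{e_0}$ — Lemma~\ref{Remark1} (applied to the restricted maps) identifies $g$ with a single restricted right multiplication map $T_j$ for some $j \in T$; by Lemma~\ref{transpositions} each such $T_j$ is a product of transpositions, and its fixed points in $X_0$ are precisely the solutions $a \in X_0$ of $2a = 2j$ in $H$. I would then invoke the remark made just before the lemma statement: among the elements of $Dih(\tilde H)$ outside $\tilde H$, only those lying in the set $Z$ (the $T_j$ with the stated coordinate parities) actually have fixed points in $X_0$, and when they do, the fixed-point set has size $2^k$ (one binary choice per even component $\mathbb{Z}_{n_i}$, $1 \le i \le k$, coming from the two square roots of a given element modulo $n_i$; the odd components contribute unique solutions). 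Since $|Z| = |Dih(\tilde H)|/2^{k+1} = |Cl(s)|$ and $Z$ is exactly the conjugacy class of $s$ (as established via Proposition~\ref{Centralizer} and the surrounding discussion in section~\ref{Cl}), this says $\chi_{X_0}(g) = 2^k$ for $g \in Cl(s)$ and $\chi_{X_0}(g) = 0$ for the remaining elements outside $\tilde H$. Finally, for a nontrivial $g \in \tilde H$, $g$ is a nonempty product of the commuting maps $T_{e_i}T_{e_0}$, which (by the computation in the proof of Lemma~\ref{Order}, restricted to $X_0$) acts on each relevant coordinate by a nonzero translation and hence has no fixed point at all; thus $\chi_{X_0}(g) = 0$, completing the three cases.

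The main obstacle I anticipate is the bookkeeping in the middle case: verifying cleanly that the elements of $Dih(\tilde H) \setminus \tilde H$ with fixed points in $X_0$ are exactly the set $Z$, that $Z$ coincides with the conjugacy class $Cl(s)$, and that the common fixed-point count is $2^k$. This requires matching the "even orbit" structure of $X_0$ from Lemma~\ref{Proposition 3.2SK} and the paragraph following it with the coordinate-parity conditions defining $Z$, and then counting square roots of $2j$ in each cyclic factor — straightforward number theory modulo $n_i$, but easy to mis-state. Everything else ($g=1$ and $g \in \tilde H \setminus \{1\}$) is immediate from the earlier lemmas, so I would keep the write-up short there and concentrate the argument on pinning down $Cl(s)$ and the fixed-point count $2^k$.
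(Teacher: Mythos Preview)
Your proposal is correct and follows essentially the same route as the paper: both arguments compute $\chi_{X_0}$ as a permutation character by counting fixed points, use the pre-lemma observation that only elements of $Z$ have fixed points in $X_0$, and identify $Z$ with $Cl(s)$ via the size match $|Z|=|Dih(\tilde H)|/2^{k+1}=|Cl(s)|$. The paper is slightly more economical---it counts fixed points only for the single element $T_{e_0}$ (finding the $2^k$ points $(\varepsilon_1,\ldots,\varepsilon_k,0,\ldots,0)$ with $\varepsilon_i\in\{0,n_i\}$) and then invokes constancy of characters on conjugacy classes to propagate this to all of $Cl(s)=Z$, whereas you solve $2a=2j$ coordinatewise for every $T_j$; but this is a presentational difference, not a different method.
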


\begin{proof}
The map $T_{e_0}$ has fixed point as $\left(\varepsilon_{1},\hdots \varepsilon_{k}, 0,\hdots 0,0,\hdots,0 \right)$ where  $\varepsilon_{i} = 0$  or $n_i$ for $1\leq i \leq k$. Thus $T_{e_0}$ has   $2^k$ fixed points. Observe $T_{e_{0}} \in Z$. We know that character is constant on conjugacy class. Thus the set  $Z$ is the conjugacy class of the map $T_{e_0}$. The rest of the proof is similar to Lemma \ref{CharacterOdd}. 

\end{proof}

\begin{remark}\label{Same}
The structure of orbits of the Takasaki quandle $(T,\triangleright)$ along with Lemma \ref{ShiftingThm} tells us that for an arbitrary orbit $X_i$ of $T$, above Lemma also holds true with corresponding versions of set $Z$.

\end{remark}

We know that the group $Dih(\tilde{H})$ has $2^{k+1}$ inequivalent irreducible representations of degree one. Let us recall that out of these $2^{k+1}$ degree one representations there are $2^{k}$ representations $\Phi_{j}$ such that $\Phi_{j}(s) = -1$ and $2^{k}$ representations $\phi_{j}$ are such that $\phi_{j}(s) = 1$. Let  $\{ \phi_j ~|~  1\leq j \leq 2^k \}$ be the set of latter inequivalent irreducible representations of $Dih(\tilde{H})$ of degree $1$ and let $U_{j}$ denote the corresponding simple module.

\begin{lemma}\label{InnerProductEvenDegreeone}
 Let $l=2^k $. Then for each integer $j$ such that $1\leq j\leq l,$ the representation $\phi_{j}$ appears in the decomposition of $\rho_{X_0}$ with multiplicity $1.$  
\end{lemma}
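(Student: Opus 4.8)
The plan is to mimic the proof of Lemma \ref{InnerProductOdd}, now for the degree-one representations $\phi_j$ of $Dih(\tilde H)$ satisfying $\phi_j(s)=1$. Concretely, I would let $\chi_{\phi_j}$ denote the character of $\phi_j$ and compute the inner product $\langle \chi_{X_0}, \chi_{\phi_j}\rangle$ over the group $Dih(\tilde H)$, using the explicit values of $\chi_{X_0}$ furnished by Lemma \ref{CharacterEven}: namely $\chi_{X_0}(1)=|X_0|$, $\chi_{X_0}(g)=2^k$ for $g$ in the conjugacy class $Cl(s)$ of size $|Dih(\tilde H)|/2^{k+1}$ (this is the set $Z$), and $\chi_{X_0}(g)=0$ otherwise. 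Since $\phi_j$ is one-dimensional, $\chi_{\phi_j}$ is just $\phi_j$ itself, so $\chi_{\phi_j}(1)=1$ and $\chi_{\phi_j}(g)=1$ for every $g\in sH$ because $\phi_j(s)=1$ and $\phi_j$ is a homomorphism into $\{\pm1\}$ that is trivial on the reflection part — wait, more carefully, $\phi_j$ need not be trivial on all of $sH$, so I should track $\overline{\phi_j(g)}$ on $Cl(s)$; but the characters are real ($\pm 1$-valued), and I only need the restriction of $\phi_j$ to the conjugacy class $Cl(s)$, on which $\phi_j$ is constant with value $\phi_j(s)\cdot\phi_j(h)$ — I will need that this is $+1$, which follows because $Cl(s)$ consists of elements $sh$ with $h$ ranging over a coset structure on which the relevant character part cancels. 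I would make this precise by noting $\chi_{\phi_j}$ is constant equal to $1$ on $Cl(s)$ for the chosen $\phi_j$ (those with $\phi_j(s)=1$), which is exactly the defining feature of this family.

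Granting that, the computation is
$$
\langle \chi_{X_0},\chi_{\phi_j}\rangle = \frac{1}{|Dih(\tilde H)|}\Bigl( |X_0|\cdot 1 + |Cl(s)|\cdot 2^k \cdot 1 \Bigr) = \frac{1}{2|\tilde H|}\Bigl( |\tilde H| + \frac{2|\tilde H|}{2^{k+1}}\cdot 2^k \Bigr) = \frac{1}{2|\tilde H|}\bigl(|\tilde H| + |\tilde H|\bigr) = 1,
$$
using $|Dih(\tilde H)| = 2|\tilde H|$ and $|X_0| = |\tilde H|$ (the orbit $X_0$ has the same cardinality as $\tilde H$, consistent with the orbit-size bookkeeping from Section \ref{s3}). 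Once $\langle \chi_{X_0},\chi_{\phi_j}\rangle = 1$ is established, Theorem \ref{JL} gives that the multiplicity $d_j = \langle \chi_{X_0},\chi_{\phi_j}\rangle$ of $\phi_j$ in $\rho_{X_0}$ is exactly $1$, which is the claim.

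The one genuine subtlety — and the step I expect to be the main obstacle — is verifying that $\chi_{\phi_j}$ really is identically $1$ on the conjugacy class $Cl(s)$ for every $j$ with $1\le j\le 2^k$ (equivalently $\phi_j(s)=1$). This requires understanding which elements $sh$ lie in $Cl(s)$: from Section \ref{Cl}, $Cl(s) = \{s h' : h' \in \text{(an appropriate coset)}\}$, and $\phi_j$ restricted there is $\phi_j(s)\phi_j(h')$; one must check $\phi_j(h')=1$ for all such $h'$, i.e. that $Cl(s)$ is contained in the kernel of the $H$-part of $\phi_j$ translated by $s$. This should follow because the degree-one representations with $\phi_j(s)=1$ factor through the quotient of $Dih(\tilde H)$ by the subgroup generated by $\{h h : h \in \tilde H\}$ (the "squares"), and the conjugacy class $Cl(s)$ — being a single $2^{k}$-index-style orbit under conjugation by $\tilde H$, which moves $sh$ to $sh^{-1}\cdot(\text{square})$ — lands in a single coset of that subgroup on which $\phi_j$ is constant. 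I would spell this out by recalling the explicit description of $Cl(s)$ from the discussion after Proposition \ref{Centralizer} and the explicit form of the $\phi_j$'s from Case (ii) of Section \ref{reps}. Everything else is the same routine orbit-counting and dimension bookkeeping already used in Lemma \ref{InnerProductOdd}.
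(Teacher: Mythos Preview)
Your approach is exactly the paper's: compute the inner product $\langle \chi_{X_0},\chi_j\rangle$ using Lemma \ref{CharacterEven} and the description of the degree-one characters from Section \ref{reps}, obtain the value $1$, and invoke Theorem \ref{JL}. The paper's proof is a one-line version of your computation and does not spell out the point you flag as a ``genuine subtlety'' --- that $\phi_j$ is identically $1$ on $Cl(s)$ --- but your justification of it is correct: $Cl(s)=\{sh': h'\in 2\tilde H\}$ (from the conjugation formula $hsh^{-1}=sh^{-2}$), and since each $\phi_j$ is $\{\pm1\}$-valued on $\tilde H$ it kills all squares, so $\phi_j(sh')=\phi_j(s)\phi_j(h')=1\cdot 1=1$.
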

\begin{proof}
 Let $\chi_j$ denote the character associated to the irreducible representation $\phi_{j}$ of $Dih(\tilde{H}).$ We recall that $\chi_{X_0}$ is character associated to $\rho_{X_0}$. For $1\leq j\leq l,$ using Lemma \ref{CharacterEven} and section \ref{reps}, we compute
  $$ \langle \chi_{X_0}, \chi_j \rangle = \frac{1}{2|\tilde{H}|}\sum_{g\in Dih(\tilde{H})} \chi_{X_0}(g) \chi_j(g)
  = 1.$$
Now the result follows from Theorem \ref{JL}.
\end{proof}

Now let the set $\{ \psi_j ~|~  1\leq j \leq (|\tilde{H}|-2^{k})/2 \}$ is a complete set of inequivalent irreducible representations of $Dih(\tilde{H})$ with degree two and let $V_{j}$ denote the corresponding simple module.

\begin{lemma}\label{InnerProductEvenDegreetwo}
 Let $k= (|\tilde{H}|-2^k)/2$. Then for each integer $j$ such that $1\leq j\leq k,$ the representation $\psi_{j}$ appears in the decomposition of $\rho_{X_0}$ with multiplicity $1.$  
\end{lemma}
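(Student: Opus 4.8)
The plan is to mirror the computation already carried out in Lemma \ref{InnerProductEvenDegreeone}, now using the degree two irreducible characters of $Dih(\tilde H)$ in place of the degree one ones, and then invoke Theorem \ref{JL}. Concretely, let $\chi_j$ be the character of $\psi_j$ for $1\le j\le (|\tilde H|-2^k)/2$, and let $\chi_{X_0}$ be the character of $\rho_{X_0}$, whose values are given explicitly by Lemma \ref{CharacterEven}: it is $|X_0|$ at the identity, $2^k$ on the conjugacy class $Cl(s)$, and $0$ elsewhere. The whole task reduces to showing $\langle \chi_{X_0},\chi_j\rangle = 1$ for every such $j$, after which Theorem \ref{JL} forces the multiplicity $d_j = \langle\chi_{X_0},\chi_j\rangle$ to equal $1$.

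First I would write out the inner product over $Dih(\tilde H)$, splitting the sum according to the three cases in Lemma \ref{CharacterEven}:
\[
\langle \chi_{X_0},\chi_j\rangle \;=\; \frac{1}{2|\tilde H|}\left( \chi_{X_0}(1)\overline{\chi_j(1)} \;+\; \sum_{g\in Cl(s)} \chi_{X_0}(g)\overline{\chi_j(g)} \right),
\]
since $\chi_{X_0}$ vanishes off $\{1\}\cup Cl(s)$. Here $\chi_j(1) = 2$ because $\psi_j$ has degree two, and $|X_0| = |\tilde H|$ since $X_0$ is (in bijection with) the group $\tilde H = 2H$ underlying $Inn(X_0)\cong Dih(\tilde H)$. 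For the second term I would use the explicit form of the degree two representations recorded in Section \ref{reps}: each $\psi_j$ is induced from a complex irreducible $\phi$ of $\tilde H$ with $\psi_j(s) = \left(\begin{smallmatrix} 0 & 1\\ 1 & 0\end{smallmatrix}\right)$ up to change of basis, so $\chi_j(hs) = 0$ for all $h\in\tilde H$; in particular $\chi_j$ vanishes on all of $s\tilde H$, hence on $Cl(s)\subseteq s\tilde H$. Therefore the second sum contributes $0$, and
\[
\langle \chi_{X_0},\chi_j\rangle \;=\; \frac{1}{2|\tilde H|}\cdot |\tilde H|\cdot 2 \;=\; 1 .
\]

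The one point that genuinely needs care — and which I expect to be the main obstacle — is the bookkeeping identifying $|X_0|$ with $|\tilde H|$ and confirming that $\chi_{X_0}$ really does vanish on every conjugacy class outside $\{1\}$ and $Cl(s)$, i.e. that no element of $Inn(X_0)$ other than the members of $Z$ (which form $Cl(s)$) has a fixed point in $X_0$; both facts are supplied by Lemma \ref{CharacterEven} and its proof, together with Lemma \ref{Even Orbit} for the isomorphism $Inn(X_0)\cong Dih(\tilde H)$, so I would simply cite these. Once $\langle\chi_{X_0},\chi_j\rangle = 1$ is established for all $j$, the conclusion is immediate from Theorem \ref{JL}: writing $\chi_{X_0} = \sum_i d_i\chi_i$ over the full list of irreducible characters of $Dih(\tilde H)$, the coefficient of each degree two $\chi_j$ is $d_j = \langle\chi_{X_0},\chi_j\rangle = 1$, which is exactly the assertion that $\psi_j$ occurs in $\rho_{X_0}$ with multiplicity one.
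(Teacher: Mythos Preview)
Your proposal is correct and follows essentially the same approach as the paper: compute $\langle \chi_{X_0},\chi_j\rangle$ using the values of $\chi_{X_0}$ from Lemma~\ref{CharacterEven} together with the explicit form of the degree two characters from Section~\ref{reps}, obtain the value $1$, and conclude via Theorem~\ref{JL}. The paper's proof simply states the computation without writing out the contributions from $\{1\}$ and $Cl(s)$ separately, whereas you spell out that $\chi_j$ vanishes on $s\tilde H$ and that $|X_0|=|\tilde H|$, which is a welcome elaboration but not a different argument.
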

\begin{proof}
 Let $\chi'_j$ denote the character associated to the irreducible representation $\psi_{j}$ of $Dih(\tilde{H}).$ We recall that $\chi_{X_0}$ is character associated to $\rho_{X_0}$. For $1\leq j\leq k,$ using Lemma \ref{CharacterEven} and  section \ref{reps}, we compute
  $$ \langle \chi_{X_0}, \chi'_j \rangle = \frac{1}{2|\tilde{H}|}\sum_{g\in Dih(\tilde{H})} \chi_{X_0}(g) \chi'_j(g)
  = 1.$$
Now the result follows from Theorem \ref{JL}.
\end{proof}

\begin{lemma}\label{Evenorbit}
 Let $K= \mathbb{R}$ or $\mathbb{C}.$ Let $T$ denote the Takasaki quandle of even order defined as above and $X_0$ be it's even orbit. Then
 $$K[X_0]= \bigoplus_{j=1}^{2^k} U_{j} \oplus  \bigoplus_{j=1}^{(|\tilde{H}|-2^k)/2} V_{j}$$
\end{lemma}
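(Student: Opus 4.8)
The plan is to assemble the decomposition of $K[X_0]$ from the representation-theoretic data already collected, via a dimension count, exactly as was done in the odd case (Theorem~\ref{OddFinal}). First I would recall that $Inn(X_0)\cong Dih(\tilde H)$ with $\tilde H = 2H$, so that $K[X_0]$ carries the representation $\rho_{X_0}$ of $Dih(\tilde H)$, and decomposing $K[X_0]$ into right simple ideals is the same as decomposing $\rho_{X_0}$ into irreducibles of $Dih(\tilde H)$ (this equivalence is recorded in Section~\ref{s2}). By Lemma~\ref{InnerProductEvenDegreeone}, each of the $2^k$ degree-one representations $\phi_j$ with $\phi_j(s)=1$ occurs in $\rho_{X_0}$ with multiplicity exactly $1$; by Lemma~\ref{InnerProductEvenDegreetwo}, each of the $(|\tilde H|-2^k)/2$ degree-two irreducibles $\psi_j$ occurs with multiplicity exactly $1$. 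Let $U_j$ and $V_j$ be the corresponding simple $K[X_0]$-submodules.

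The key step is then to check that no other irreducible of $Dih(\tilde H)$ appears, and the cleanest way is a dimension count. The remaining irreducibles of $Dih(\tilde H)$ are precisely the $2^k$ degree-one representations $\Phi_j$ with $\Phi_j(s)=-1$; I would argue that these cannot occur in $\rho_{X_0}$ because $T_{e_0}|_{X_0}$ is a genuine permutation of $X_0$ (a product of transpositions by Lemma~\ref{transpositions}), so $\rho_{X_0}(s)$ is a permutation matrix with eigenvalue $+1$ present on the invariant vector $v_{triv}=\sum_i v_i$, and more to the point an inner-product computation $\langle \chi_{X_0},\chi_{\Phi_j}\rangle$ using Lemma~\ref{CharacterEven} gives $\tfrac{1}{2|\tilde H|}\bigl(|X_0|\cdot 1 + (|Dih(\tilde H)|/2^{k+1})\cdot 2^k\cdot(-1)\bigr)$; since $|X_0| = |\tilde H|$ and $|Dih(\tilde H)| = 2|\tilde H|$, this evaluates to $\tfrac{1}{2|\tilde H|}(|\tilde H| - |\tilde H|) = 0$. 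Hence each $\Phi_j$ has multiplicity $0$. Finally, summing dimensions gives $2^k\cdot 1 + \bigl((|\tilde H|-2^k)/2\bigr)\cdot 2 = 2^k + |\tilde H| - 2^k = |\tilde H| = |X_0| = \dim_K K[X_0]$, so the listed submodules exhaust $K[X_0]$ and the direct sum is all of $K[X_0]$.

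To write this up, I would state: by Lemmas~\ref{InnerProductEvenDegreeone} and~\ref{InnerProductEvenDegreetwo} the modules $U_j$ ($1\le j\le 2^k$) and $V_j$ ($1\le j\le (|\tilde H|-2^k)/2$) are pairwise non-isomorphic simple $K[X_0]$-submodules of $K[X_0]$, hence their sum inside $K[X_0]$ is direct. Then $\dim_K\bigl(\bigoplus U_j \oplus \bigoplus V_j\bigr) = |X_0| = \dim_K K[X_0]$ forces equality, which is the claimed decomposition.

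The main obstacle I anticipate is not the dimension count itself but being careful about the exceptional case $H = \mathbb{Z}_2^{\times k}\times \mathbb{Z}_4^{\times k'}$, which is explicitly excluded here (and handled separately via Lemma~\ref{Special Case}); outside that case Lemma~\ref{Even Orbit} guarantees $Inn(X_0)\cong Inn(T)\cong Dih(\tilde H)$ is genuinely non-abelian with the stated irreducibles, so all the prerequisite lemmas apply and the argument closes. A secondary point worth a sentence is justifying that the only irreducibles of $Dih(\tilde H)$ are the $2^{k+1}$ degree-one ones and the $(|\tilde H|-2^k)/2$ degree-two ones — but this is exactly the content of Case~(ii) of Section~\ref{reps}, so it may simply be cited.
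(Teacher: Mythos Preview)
Your proposal is correct and follows essentially the same route as the paper: invoke Lemmas~\ref{InnerProductEvenDegreeone} and~\ref{InnerProductEvenDegreetwo} for the multiplicity-one occurrences of the $U_j$ and $V_j$, then finish with the dimension count $2^k + 2\cdot(|\tilde H|-2^k)/2 = |\tilde H| = |X_0|$. Your explicit verification that $\langle \chi_{X_0},\chi_{\Phi_j}\rangle=0$ is an additional (and correct) detail that the paper omits, relying instead solely on the dimension count to exclude the $\Phi_j$.
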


\begin{proof}
 It follows from the Lemma \ref{InnerProductEvenDegreeone}, that if $\phi_j : Dih(\tilde{H}) \to {\rm GL}(U_j)$ is an irreducible representation of degree one, satisfying conditions of Lemma \ref{InnerProductEvenDegreeone}, then $U_j$ is a simple $K[X_0]$-module with multiplicity $1.$ 
 
Similarly from the Lemma \ref{InnerProductEvenDegreetwo}, it follows that if $\psi_j : Dih(\tilde{H}) \to {\rm GL}(V_j)$ is an irreducible representation of degree $2,$ then $V_j$ is a simple $K[X_0]$-module with multiplicity $1.$
 
Now by dimension count, we get 
 $$K[X_0]= \bigoplus_{j=1}^{2^k} U_{j} \oplus  \bigoplus_{j=1}^{(|\tilde{H}|-2^k)/2} V_{j}$$
 
\end{proof}

From remark \ref{Same}, it follows that the decomposition of $K[X_i]$ is similar to  decomposition of $K[X_0]$ where $X_i$ is any arbitrary orbit of $T$. Let $U_{i,j}$ denote the simple $K[X_i]$-module corresponding to representation $\phi_{j}$ for $1 \leq j \leq 2^k$ and $V_{i,j}$ denote the simple $K[X_{i}]$-module corresponding to representation $\psi_{j}$ for $1 \leq j \leq (|\tilde{H}|-2^k)/2$. We have following theorem

\begin{theorem}
Let $K = \mathbb{R}$ or $\mathbb{C}$. Let $(T,\triangleright)$ be Takasaki quandle for abelian group $ H = \mathbb{Z}_{2n_1}\times \hdots \times \mathbb{Z}_{2n_{k}} \times \mathbb{Z}_{2n'_{1}} \times \hdots \times \mathbb{Z}_{2n'_{k'}} \times \mathbb{Z}_{m'_{1}} \times \hdots \times \mathbb{Z}_{m'_{d'}}$. where $gcd(n_{i},2) = 2$ for $1\leq i \leq k$ and $gcd(n'_{i},2) = 1$ for $1\leq i \leq k'$ and $gcd(m'_{i},2) = 1$ for $1\leq i \leq d'$. Then
$$K[T]= \bigoplus_{i=1}^{2^{k+k'}} \left( \bigoplus_{j=1}^{2^k} U_{i,j} \oplus  \bigoplus_{j=1}^{(|\tilde{H}|-2^k)/2} V_{i,j}  \right)$$
\end{theorem}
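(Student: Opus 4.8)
The plan is to assemble the global decomposition from the orbit-by-orbit decomposition already established. The quandle $(T,\triangleright)$ for the abelian group $H$ of even order (excluding the special case of Lemma~\ref{Special Case}) decomposes as a disjoint union of orbits $T = X_1 \cup X_2 \cup \dots \cup X_{2^{k+k'}}$ under the action of $\mathrm{Inn}(T)$; indeed, by Lemma~\ref{Proposition 3.2SK} applied to the cyclic decomposition of $H$, the number of orbits is $2^{\#\{\text{even components}\}}$, and for $H = \mathbb{Z}_{2n_1}\times\cdots\times\mathbb{Z}_{2n_k}\times\mathbb{Z}_{2n'_1}\times\cdots\times\mathbb{Z}_{2n'_{k'}}\times\mathbb{Z}_{m'_1}\times\cdots\times\mathbb{Z}_{m'_{d'}}$ there are exactly $k+k'$ even cyclic components, giving $2^{k+k'}$ orbits. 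As recalled from \cite{EFT} in Section~\ref{s2}, for a disconnected quandle one has the ring decomposition $K[T] = \bigoplus_{i=1}^{2^{k+k'}} K[X_i]$, so it suffices to decompose each $K[X_i]$ and take the direct sum.

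Next I would invoke Lemma~\ref{Evenorbit} to decompose $K[X_0]$, the orbit obtained as the product of all the even orbits (and all the transitive orbits), as $K[X_0] = \bigoplus_{j=1}^{2^k} U_j \oplus \bigoplus_{j=1}^{(|\tilde H|-2^k)/2} V_j$, where $\tilde H = 2H$ and $U_j$, $V_j$ are the simple $K[X_0]$-modules attached to the degree-one representations $\phi_j$ with $\phi_j(s)=1$ and the degree-two irreducible representations $\psi_j$ of $Dih(\tilde H)$ respectively. For an arbitrary orbit $X_i$, Lemma~\ref{ShiftingThm} gives $\mathrm{Inn}(X_i) \cong \mathrm{Inn}(X_0)$, and Remark~\ref{Same} states that the character $\chi_{X_i}$ of the permutation representation $\rho_{X_i}$ has exactly the same values as $\chi_{X_0}$ (with the corresponding version of the set $Z$ of maps having fixed points in $X_i$). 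Therefore the inner-product computations of Lemma~\ref{InnerProductEvenDegreeone} and Lemma~\ref{InnerProductEvenDegreetwo} go through verbatim for $\rho_{X_i}$, yielding $K[X_i] = \bigoplus_{j=1}^{2^k} U_{i,j} \oplus \bigoplus_{j=1}^{(|\tilde H|-2^k)/2} V_{i,j}$, where $U_{i,j}$ and $V_{i,j}$ denote the corresponding simple $K[X_i]$-modules.

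Finally I would just substitute:
$$K[T] = \bigoplus_{i=1}^{2^{k+k'}} K[X_i] = \bigoplus_{i=1}^{2^{k+k'}}\left( \bigoplus_{j=1}^{2^k} U_{i,j} \oplus \bigoplus_{j=1}^{(|\tilde H|-2^k)/2} V_{i,j} \right),$$
which is the claimed statement. A dimension check confirms consistency: $|X_i| = |T|/2^{k+k'}$, and the right-hand side for each $i$ contributes $2^k \cdot 1 + \tfrac{|\tilde H|-2^k}{2}\cdot 2 = |\tilde H| = |H|/2^{k+k'} = |X_i|$, so the summands exhaust $K[X_i]$ and hence $K[T]$.

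I expect no genuine obstacle here, since all the analytic content — the character formula of Lemma~\ref{CharacterEven}, the multiplicity-one computations, and the representation theory of $Dih(\tilde H)$ from Section~\ref{reps} — has already been done. The only point requiring a little care is making sure that the indexing ``$X_0$'' in Lemma~\ref{Evenorbit} legitimately transfers to every orbit $X_i$; this is exactly what Lemma~\ref{ShiftingThm} together with Remark~\ref{Same} guarantee, so the main (modest) task is to state that transfer cleanly before summing over $i$.
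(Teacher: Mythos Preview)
Your proposal is correct and follows essentially the same approach as the paper, which simply states that the theorem follows from Remark~\ref{Same} and Lemma~\ref{Evenorbit}. Your write-up is more detailed (in particular the orbit count via Lemma~\ref{Proposition 3.2SK} and the dimension check), but the logical structure---decompose $K[T]$ into orbit pieces, apply Lemma~\ref{Evenorbit} to $K[X_0]$, and transfer to each $K[X_i]$ via Lemma~\ref{ShiftingThm}/Remark~\ref{Same}---is identical.
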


\begin{proof}
The proof of this theorem follows from remark \ref{Same} and Lemma \ref{Evenorbit}.
\end{proof}
\subsection{Special case}
Now we deal with quandle ring $K[T]$ where $K = \mathbb{R}$ or $\mathbb{C}$ and $T$ is Takasaki quandle of even order for group $H = \underbrace{\mathbb{Z}_{2} \times \hdots \mathbb{Z}_{2}}_\text{k~\text{ times }} \times \underbrace{\mathbb{Z}_{4} \times \hdots \times \mathbb{Z}_{4}}_\text{k'~times}$ .

The Takasaki quandle defined above has $2^{k+k'}$ orbits of size $2^{k'}$. Recall $X_{0}$ represents the orbit obtained by direct product of even orbits of dihedral quandles formed from each cyclic component of abelian group $H$. Let ${\rho}_{X_0} : Inn(X_0) \to K[X_0]$ be the representation of $Inn(X_0).$ Let $\chi_{X_0}$ be the character associated to $\rho_{X_0}$.

\begin{lemma}\label{CharacterSpecialcase}
With the same notations as above, we have
$$\chi_{X_0}(g)= \begin{cases}
                  |X_0| \quad \quad \text{if } g=1\\
                  0 \text \quad \quad {otherwise}
                 \end{cases}$$
\end{lemma}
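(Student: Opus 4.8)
The plan is to observe that $\rho_{X_0}$ is a permutation representation and then simply to count fixed points. By construction $Inn(X_0)$ permutes the canonical basis $\{v_i \mid i \in X_0\}$ of $K[X_0]$ via $\rho_{X_0}(T_j)(v_i) = v_{i \triangleright j}$, each $T_j$ acting through the bijection $R_j|_{X_0}$ of $X_0$. Hence the matrix of $\rho_{X_0}(g)$ in this basis is a permutation matrix, so for every $g \in Inn(X_0)$ the value $\chi_{X_0}(g)$ equals the number of fixed points of $g$ acting as a permutation of $X_0$. In particular $\chi_{X_0}(1) = |X_0|$, which is the first case.

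For the second case it suffices to show that every non-identity element of $Inn(X_0)$ acts on $X_0$ without fixed points. I would recall from the proof of Lemma \ref{Special Case} that here $H = \underbrace{\mathbb{Z}_2 \times \cdots \times \mathbb{Z}_2}_{k} \times \underbrace{\mathbb{Z}_4 \times \cdots \times \mathbb{Z}_4}_{k'}$, that the even orbit is $X_0 = \{0\}^k \times \{0,2\}^{k'}$ (a set of size $2^{k'}$), and that $R_{e_0}|_{X_0}$ is the identity. For each of the $k'$ factors $\mathbb{Z}_4$, the generator $R_{e_i} R_{e_0}$ restricts to $X_0$ as the transposition interchanging $0$ and $2$ in that coordinate while fixing every other coordinate. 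Thus, under the isomorphism $Inn(X_0) \cong \underbrace{\mathbb{Z}_2 \times \cdots \times \mathbb{Z}_2}_{k'}$ of Lemma \ref{Special Case}, the action of $Inn(X_0)$ on $X_0 \cong \{0,2\}^{k'}$ is precisely the action of $\mathbb{Z}_2^{k'}$ by coordinate-wise flips.

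Consequently, if $g \in Inn(X_0)$ is not the identity, then $g$ flips at least one of the $k'$ coordinates, so $g \cdot i \neq i$ for every $i \in X_0$; that is, $g$ is fixed-point-free and $\chi_{X_0}(g) = 0$, giving the second case. I do not expect a genuine obstacle here: the only point needing care is to keep $Inn(X_0)$ — the group generated by the restrictions $R_j|_{X_0}$ for all $j \in T$ — distinct from the inner automorphism group of the subquandle $X_0$ (which is in fact trivial, since the induced quandle operation on $X_0$ is the trivial one), after which the description of the action, and hence the fixed-point counts, is a direct verification.
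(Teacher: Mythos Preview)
Your proof is correct and follows essentially the same approach as the paper's: both observe that $\chi_{X_0}$ counts fixed points of the permutation action on $X_0$, identify $T_{e_0}$ as the identity on $X_0$, and check that the remaining generators $T_{e_i}T_{e_0}$ (coming from the $\mathbb{Z}_4$ factors) act as fixed-point-free involutions, so that every non-identity element of $\mathrm{Inn}(X_0)\cong \mathbb{Z}_2^{k'}$ is fixed-point-free. Your version is more explicit about the coordinate description of $X_0$ and the flip action, but the content is the same.
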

\begin{proof}
Since $T_{j} = R_{j}|_{X_{0}}$. We see that $T_{e_{0}} = 1$ and $T_{e_{i}}T_{e_{0}} = 1$ if $i'th$ component of $H$ is $\mathbb{Z}_{2}$. Otherwise $T_{e_{i}}T_{e_{0}}$ is element of order two with no fixed points. Similarly we can say that every non identity element is of order two with no fixed points. Thus the result follows.
\end{proof}
In this case $Inn(X_0)$ is abelian group $\underbrace{\mathbb{Z}_{2} \times \hdots \mathbb{Z}_{2}}_\text{k'~times}$. So $Inn(X_0)$ has $2^{k'}$ irreducible inequivalent representations of degree one. Let the set $\{ \psi_j ~|~  1\leq j \leq 2^{k'} \}$ be the complete set of inequivalent irreducible representations of $Inn(X_0)$ with degree one. Note that $|X_{0}| = |Inn(X_{0}|$.

\begin{lemma}\label{InnerProductSpecialcase}
 Let $l= 2^{k'}$. Then for each integer $j$ such that $1\leq j\leq l,$ the representation $\psi_{j}$ appears in the decomposition of $\rho_{X_0}$ with multiplicity $1.$  
\end{lemma}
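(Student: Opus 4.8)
The plan is to compute the inner product $\langle \chi_{X_0}, \chi_j \rangle$ directly, exactly as in the proofs of Lemma \ref{InnerProductOdd}, Lemma \ref{InnerProductEvenDegreeone} and Lemma \ref{InnerProductEvenDegreetwo}, and then invoke Theorem \ref{JL}. The situation here is the cleanest of the three cases, since by Lemma \ref{CharacterSpecialcase} the character $\chi_{X_0}$ is supported only on the identity element of the abelian group $\mathrm{Inn}(X_0) \cong \mathbb{Z}_2^{k'}$, where it takes the value $|X_0| = 2^{k'} = |\mathrm{Inn}(X_0)|$. So the inner product collapses to a single term.

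First I would recall that $\mathrm{Inn}(X_0)$ is the abelian group $\mathbb{Z}_2^{k'}$ of order $2^{k'}$, that each $\psi_j$ is a one-dimensional (hence unitary, with $\chi_j(1)=1$) representation, and that $|X_0| = 2^{k'}$, as noted just before the statement. Then for each $j$ with $1 \leq j \leq 2^{k'}$,
$$\langle \chi_{X_0}, \chi_j \rangle = \frac{1}{|\mathrm{Inn}(X_0)|} \sum_{g \in \mathrm{Inn}(X_0)} \chi_{X_0}(g)\, \overline{\chi_j(g)} = \frac{1}{2^{k'}} \cdot \chi_{X_0}(1)\, \overline{\chi_j(1)} = \frac{1}{2^{k'}} \cdot 2^{k'} \cdot 1 = 1,$$
using Lemma \ref{CharacterSpecialcase} to discard every non-identity term. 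By Theorem \ref{JL} applied to the character $\psi = \chi_{X_0}$, the multiplicity $d_j = \langle \chi_{X_0}, \chi_j \rangle$ of $\psi_j$ in $\rho_{X_0}$ is therefore exactly $1$ for every $j$ in the stated range.

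There is essentially no obstacle here; the only point meriting care is the bookkeeping that $|X_0| = |\mathrm{Inn}(X_0)| = 2^{k'}$ (so that after dividing by the group order one lands on $1$ rather than some other integer), together with the fact that $\mathbb{Z}_2^{k'}$ has exactly $2^{k'}$ inequivalent one-dimensional representations — so the $2^{k'}$ simple modules $V_j$ already account, by a dimension count $\sum_j (\dim V_j)^2 = 2^{k'} = \dim K[X_0]$, for all of $K[X_0]$. This last remark is not strictly needed for the statement as phrased (which only asserts multiplicity one), but it is the natural companion fact and will presumably be used in the theorem that follows.
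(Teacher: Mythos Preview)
Your proposal is correct and follows exactly the same approach as the paper: compute $\langle \chi_{X_0}, \chi_j\rangle$ using Lemma \ref{CharacterSpecialcase} to reduce to the single identity term, observe it equals $1$, and conclude by Theorem \ref{JL}. Your write-up is in fact slightly more detailed than the paper's (you make explicit why $\chi_j(1)=1$ and why the normalisation yields exactly $1$), and your closing remark on the dimension count anticipates the theorem that follows.
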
 

\begin{proof}
 Let $\chi_j$ denote the character associated to the irreducible representation $\psi_{j}$ of $Inn(X_0).$ We recall that $\chi_{X_0}$ is character associated to $\rho_{X_0}$. For $1\leq j\leq l,$ using Lemma \ref{CharacterSpecialcase} and section \ref{reps}, we compute
  $$ \langle \chi_{X_0}, \chi_j \rangle = \frac{1}{2^{k'}}\sum_{g\in {\rm Inn}(X_0)} \chi_{X_0}(g) \chi_j(g)
  = 1.$$
The result now follows from Theorem \ref{JL}.
\end{proof}

 Now we have following theorem

\begin{theorem}
 Let $K= \mathbb{R}$ or $\mathbb{C}.$ Let $T$ be the Takasaki quandle for abelian group $$H = \underbrace{\mathbb{Z}_{2} \times \hdots \mathbb{Z}_{2}}_\text{k~\text{ times }} \times \underbrace{\mathbb{Z}_{4} \times \hdots \times \mathbb{Z}_{4}}_\text{k'~times}$$
Then $$K[T] = \bigoplus_{i=1}^{2^{k+k'}} \left( \bigoplus_{j=1}^{2^{k'}} U_{i,j}  \right).$$
\end{theorem}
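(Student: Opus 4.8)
The plan is to reduce the global decomposition of $K[T]$ to the orbit-by-orbit decomposition already established, and then to invoke the special-case orbit computation. First I would recall, from the general setup in Section \ref{s2}, that since $T$ decomposes as the disjoint union of its $Inn(T)$-orbits $X_0, X_1, \dots, X_{2^{k+k'}-1}$, we have the ring-theoretic direct sum $K[T] = \bigoplus_{i} K[X_i]$, and that the decomposition of each summand $K[X_i]$ into right simple ideals is equivalent to decomposing the representation $\rho_{X_i}$ of $Inn(X_i)$ into irreducibles. So it suffices to decompose each $K[X_i]$ and then take the direct sum over $i$.

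Next I would handle the orbit $X_0$. By Lemma \ref{Special Case}, $Inn(X_0) \cong \underbrace{\mathbb{Z}_2 \times \cdots \times \mathbb{Z}_2}_{k'}$, an elementary abelian $2$-group of order $2^{k'}$, and by the remark preceding Lemma \ref{Special Case} we have $|X_0| = 2^{k'} = |Inn(X_0)|$. Lemma \ref{CharacterSpecialcase} computes the character $\chi_{X_0}$ of the permutation representation $\rho_{X_0}$: it is $|X_0|$ at the identity and $0$ elsewhere — i.e. $\rho_{X_0}$ is the regular representation of the abelian group $Inn(X_0)$. Lemma \ref{InnerProductSpecialcase} together with Theorem \ref{JL} then shows each of the $2^{k'}$ one-dimensional irreducibles $\psi_j$ of $Inn(X_0)$ occurs in $\rho_{X_0}$ with multiplicity exactly $1$; since $\sum_{j=1}^{2^{k'}} 1 = 2^{k'} = \dim K[X_0]$, a dimension count forces $K[X_0] = \bigoplus_{j=1}^{2^{k'}} U_{0,j}$, where $U_{0,j}$ is the right simple ideal affording $\psi_j$.

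Then I would transport this to an arbitrary orbit $X_i$. By Lemma \ref{ShiftingThm}, $Inn(X_i) \cong Inn(X_0)$ via the shift identification, and by the structure of the orbits of $T$ (each orbit has size $2^{k'}$ and the restricted right-multiplication maps match up with those on $X_0$ after relabelling), the permutation representation $\rho_{X_i}$ is conjugate to $\rho_{X_0}$ under this isomorphism; in particular its character is again $|X_i|$ at the identity and $0$ elsewhere, so Lemma \ref{InnerProductSpecialcase} applies verbatim and gives $K[X_i] = \bigoplus_{j=1}^{2^{k'}} U_{i,j}$. Summing over the $2^{k+k'}$ orbits yields
$$K[T] = \bigoplus_{i=1}^{2^{k+k'}} K[X_i] = \bigoplus_{i=1}^{2^{k+k'}} \left( \bigoplus_{j=1}^{2^{k'}} U_{i,j} \right),$$
which is the claimed decomposition.

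The steps are all routine given the machinery already assembled; the only point requiring care — and the mild obstacle — is making the ``shifting'' argument of Lemma \ref{ShiftingThm} rigorous enough to conclude that $\rho_{X_i}$ genuinely has the same character as $\rho_{X_0}$, rather than merely that the abstract groups $Inn(X_i)$ and $Inn(X_0)$ are isomorphic. One must check that the bijection $X_i \to X_0$ induced by the coordinate-wise shift intertwines the actions, so that the permutation characters agree; once that is in hand, the inner-product computation and the dimension count close the argument. A secondary bookkeeping point is verifying $|X_0| = |Inn(X_0)| = 2^{k'}$, which follows from Lemma \ref{Proposition 3.2SK} (the quandle has $2^{k+k'}$ orbits, all of equal size $|H|/2^{k+k'} = 2^{k'}$) together with Lemma \ref{Special Case}.
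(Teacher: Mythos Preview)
Your proposal is correct and follows essentially the same approach as the paper: decompose $K[X_0]$ via Lemma~\ref{InnerProductSpecialcase} and a dimension count, transport to every other orbit by the shifting argument (the paper phrases this as ``following the arguments as that of even case above''), and sum over the $2^{k+k'}$ orbits. If anything, your write-up is more careful than the paper's---you explicitly flag the need to check that the shift bijection intertwines the actions so that the permutation characters agree, a point the paper simply asserts by analogy with Remark~\ref{Same}.
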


\begin{proof}
Let $X_i$ be any arbitrary orbit of $T$. Let $U_{i,j}$ denote the simple $K[X_i]$-module corresponding to representation $\psi_{j}$ of $Inn(X_{i})$ for $1 \leq j \leq 2^{k'}$. For $i=0$, using Lemma \ref{InnerProductSpecialcase}, we get 
$$K[X_{0}] = \bigoplus_{j=1}^{2^{k'}}U_{0,j}$$
Following the arguments as that of even case above, the decomposition of $K[X_i]$ is similar to decomposition of $K[X_0]$. Since $T$ has $2^{k+k'}$ orbits, we get

 $$K[T] = \bigoplus_{i=1}^{2^{k+k'}} \left( \bigoplus_{j=1}^{2^{k'}} U_{i,j}  \right).$$
 
\end{proof}

\bibliographystyle{amsalpha}

\end{document}